\theoremstyle{plain} 
\newtheorem{thm}{Theorem}[section]
\newtheorem{cor}[thm]{Corollary}
\newtheorem{lm}[thm]{Lemma}
\newtheorem{prop}[thm]{Proposition}
\theoremstyle{definition}
\newtheorem{defn}[thm]{Definition}
\newtheorem{conv}[thm]{Convention}
\newtheorem{notation}[thm]{Notation}
\newtheorem{remark}[thm]{Remark}
\newtheorem{example}[thm]{Example}
\newcommand{\bigoh}[1]{\mathrm{O}\left(#1\right)}
\newcommand{\indij}{1_{\{i,j\}}}
\newcommand{\indijprime}{1_{\{i',j'\}}}
\newcommand{\var}{\text{Var}}
\newcommand{\cov}{\text{Cov}}
\newcommand{\expect}[1]{\mathbb{E}\left[#1\right]}
\newcommand{\prob}[1]{\mathbb{P}\left[#1\right]}
\newcommand{\nsg}{\mathcal{S}}
\newcommand{\gens}{\mathcal{A}}
\newcommand{\ring}[1]{\ensuremath{\mathbb{#1}}}
\renewcommand\>{\rangle}
\newcommand\ZZ{\ring{Z}}
\newcommand\kk{\Bbbk}
\newcommand{\excise}[1]{}
\begin{document}
\title{Random numerical semigroups and a simplicial complex of irreducible semigroups}

\author{J. A. De Loera, C. O'Neill, D. Wilburne}
\date{\today}

\begin{abstract}
\hspace{-2.05032pt}
We examine properties of random numerical semigroups  under a probabilistic model inspired by the Erd\"os-R\'enyi model for random graphs. We provide a threshold function for cofiniteness, and bound the expected embedding dimension, genus, and Frobenius number of random semigroups.  Our results follow, surprisingly, from the construction of a very natural shellable simplicial complex whose facets are in bijection with irreducible numerical semigroups of a fixed Frobenius number and whose $h$-vector determines the probability that a particular element lies in the semigroup.  
\end{abstract}

\maketitle


\section{Introduction}
\label{sec:intro}

A \emph{numerical semigroup} is a subset $S$ of the non-negative integers $\mathbb Z_{\ge 0}$ that is closed under addition.  A~nonnegative integer $n$ is a \emph{gap} of $S$ if $n \notin S$ and we denote the set of gaps of $S$ by $G(S)$.  Numerical semigroups appear in several areas of mathematics~\cite{assi+pedro}, and there are several interesting combinatorial invariants of a semigroup~\cite{rosales+pedro}.  Notable numerical semigroup invariants include the \emph{embedding dimension} $e(S)$, which is the number of minimal generators of $S$ and the \emph{genus} $g(S)$, which is the number of gaps of $S$, i.e.,\ $g(S) = \#G(S)$, and the \emph{Frobenius number} $F(S)$, which is the largest gap of $S$.  The latter two invariants are usually only defined when $S$ is \emph{cofinite}, that is, when $S$ has finite complement in $\mathbb Z_{\ge 0}$.  The theory of numerical semigroups is a vibrant subject, with connections to algebraic geometry and commutative algebra \cite{abhyankar1967local, barucci1997maximality, brown1992one, campillo1983onsaturations, kunz1970value, sally1977associated} as well as integer optimization and number theory (see \cite{assi+pedro} and references therein).  In this paper, we investigate invariants of numerical semigroups from a probabilistic point of view.

V.~Arnol'd \cite{arnoldo}, J.~Bourgain and Y.~Sinai \cite{bourgain+sinai} initiated the study of  the ``average behavior''  of numerical semigroups by analyzing the Frobenius function $F(S)$ for ``typical'' numerical semigroups (see more recent work in \cite{alievhenkaicke}).  Each of these papers produced random numerical semigroups using the uniform probability distribution on the collection
$$G(N,T) = \{\mathbf a \in \mathbb Z^N_{>0} \colon \gcd(\mathbf a) = 1 \text{ and } \|\mathbf a\| \leq T\}$$
of generating sets, and each proved several interesting statements about the expected value (in the usual probabilistic sense) of the Frobenius number.  See the references in \cite{alievhenkaicke} for a thorough overview.  

In this paper, we study a different model, which generates at random a numerical semigroup $\nsg$ according to the following procedure: 

\begin{enumerate}
\item fix a nonnegative integer $M$ and a probability $p \in [0,1]$;
\item initialize a set of generators $\gens=\{0\}$ for $\nsg$;
\item independently choose with probability $p$ whether to include each $n \le M$ in $\gens$.
\end{enumerate}
The notation $\nsg \sim S(M,p)$ indicates $\nsg$ is a random numerical semigroup produced with this model.  
A similar model was recently used to produce random monomial ideals \cite{rmi} (that is, each multivariate monomial with bounded total degree is included in a generating set with probability~$p$).  The authors dubbed this model the ``ER-type model'' for its resemblance to the Erd\H os-R\'enyi  model of random graphs~\cite{Erdos+Renyi}; we will use the same convention.  

Unlike previously used models, which sampled uniformly among numerical semigroups with a fixed number of generators, the ER-type model allows one to specify a probability as input, yielding more refined control over the numerical semigroups produced.  Our model is also more closely aligned with the ``standard'' sampling methods from probabilistic combinatorics, and more compatible with the use of numerical semigroups in integer programming, where non-cofinite semigroups occur alongside cofinite ones.  

Our main result is as follows.  

\begin{thm}\label{t:maintheorem}
Let $\nsg\sim S(M,p)$, where $p = p(M)$ is a monotone decreasing function of $M$.
\begin{enumerate}[a)]
\item If $p \ll \frac{1}{M},$ then $\nsg = \langle 0\rangle$ a.a.s.
\item If $\frac{1}{M} \ll p \ll 1$, then $\nsg$ is cofinite, i.e.,  the set of gaps is finite, a.a.s and 
$$\lim_{M \to \infty} \expect{e(\nsg)} = \lim_{M \to \infty} \expect{g(\nsg)} = \lim_{M \to \infty} \expect{F(\nsg)} = \infty.$$
\item If $\lim_{M \to \infty} p(M) > 0$, then 
$$\lim_{M \to \infty} \expect{e(\nsg)}, \lim_{M \to \infty} \expect{g(\nsg)}, \lim_{M \to \infty} \expect{F(\nsg)} < \infty,$$
and each limit is bounded by explicit rational functions in $p$ (see page~11).  
\end{enumerate}
\end{thm}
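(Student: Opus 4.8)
The plan is to reduce all three regimes to first-moment estimates together with good control of the two probabilities $\prob{n\in\nsg}$ and $\prob{n\text{ is a minimal generator}}$, the latter being precisely the data the abstract attributes to the $h$-vector. Write $A=\gens\minus\{0\}$, so the indicators $X_n=1_{\{n\in A\}}$, $1\le n\le M$, are i.i.d.\ Bernoulli$(p)$, and record the dictionary: $\nsg$ is cofinite iff $A\neq\nothing$ and $\gcd(A)=1$; the minimal generators of $\nsg$ are exactly the $n\in A$ that are not a sum of two positive elements of $\nsg$; and $\expect{e(\nsg)}=\sum_n\prob{n\text{ a min.\ gen.}}$, $\expect{g(\nsg)}=\sum_n\prob{n\notin\nsg}$, $\expect{F(\nsg)}=\sum_n\prob{F\ge n}$. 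Because for each fixed $M$ the positive-probability event ``all generators even'' leaves infinitely many odd gaps, I read the genus and Frobenius expectations conditionally on cofiniteness, an event of probability $\to1$ in regimes (b)--(c). Part (a) is then immediate: $\prob{A\neq\nothing}\le\expect{\abs A}=Mp\to0$ when $p\ll\frac1M$, so $\nsg=\langle 0\rangle$ a.a.s.

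For part (b) I would first get cofiniteness by conditioning on $\abs A=k$: given this, $A$ is uniform among $k$-subsets of $\{1,\dots,M\}$, and a union bound over primes gives $\prob{\gcd(A)>1\mid\abs A=k}\le\sum_q\binom{\lfloor M/q\rfloor}{k}\big/\binom Mk\le\sum_q q^{-k}\le3\cdot2^{-k}$ for $k\ge2$. Integrating against $\abs A\sim\mathrm{Bin}(M,p)$ yields $\prob{\nsg\text{ not cofinite}}\le\prob{\abs A\le1}+3\expect{2^{-\abs A}}\le(1+Mp)(1-p)^{M-1}+3(1-p/2)^M$, which vanishes whenever $Mp\to\infty$; this also settles cofiniteness in regime (c). Divergence of the remaining invariants is carried by the smallest generator $m_1=\min A$: since $1,\dots,m_1-1$ are gaps, $F\ge g\ge m_1-1$, while $\prob{m_1\le t}\le tp\to0$ for each fixed $t$, so $m_1\to\infty$ in probability and hence $\expect{g},\expect{F}\to\infty$.

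The embedding dimension is the subtle case, and I expect it to be the main obstacle. Using independence of $\{n\in A\}$ (coordinate $n$) from representability (coordinates $<n$), I write $\prob{n\text{ a min.\ gen.}}=p\,q_n$ with $q_n=\prob{n\text{ is not a sum of}\ge2\text{ elements of }A}$, and lower-bound $q_n$ by a union bound over representations of $n$. The simple representations---distinct generators, each used once---contribute at most $p\big[(1+p)^{n-1}-1\big]$, which is $\bigoh{\sqrt p}$ once $n\le\tfrac1{2p}\ln(1/p)$; the repeated-generator representations are of lower order (the pure multiples $n=ca$ give only $\le d(n)\,p=p^{1-\littleoh1}$). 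Hence $q_n\ge\tfrac12$ on a window of length $\gg 1/p$, giving $\expect{e(\nsg)}\ge\tfrac p2\min\!\big(\tfrac1{2p}\ln(1/p),\,M\big)=\tfrac12\min\!\big(\tfrac12\ln(1/p),\,Mp\big)\to\infty$. The genuine technical point is to make this representation count uniform in $n$: a representation reusing one generator with multiplicity carries probability $p^{(\#\text{distinct})}$, not $p^{(\#\text{parts})}$, so the naive composition bound is not itself an upper bound, and the multiset types must be organized by their number of distinct parts.

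For part (c), with $p$ bounded away from $0$, everything follows from geometric decay of $\prob{n\notin\nsg}$. Splitting $\{1,\dots,n-1\}$ into the disjoint pairs $\{a,n-a\}$ and noting that any fully chosen pair forces $n\in\nsg$, I obtain $\prob{n\notin\nsg}\le(1-p^2)^{\lfloor(n-1)/2\rfloor}$, and the same pairing bounds $q_n$; summing the resulting geometric series shows $\expect{g},\expect{F},\expect{e}$ are all finite. Passing to $\lim_{M\to\infty}$, the sums $\sum_n\prob{n\notin\nsg}$, $\sum_n\prob{F\ge n}$ and $\sum_n p\,q_n$ become infinite series whose terms are given in closed form by the $h$-vector formula for $\prob{n\in\nsg}$; since that generating function is rational in $p$, each limit is the promised explicit rational function of $p$.
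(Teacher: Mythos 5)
Your parts (a), the cofiniteness claim in (b)--(c), and the divergence of $\expect{g(\nsg)}$ and $\expect{F(\nsg)}$ in regime (b) are all correct, and two of them take routes genuinely different from the paper's. For cofiniteness, the paper runs a second-moment argument on the number of coprime pairs in $\gens$ (Theorem~\ref{t:cofinite}), whereas you condition on $|\gens|$ and union-bound over prime common divisors; your inequality $\binom{\lfloor M/q\rfloor}{k}/\binom{M}{k}\le q^{-k}$ checks out, so this works and is arguably more elementary. For genus and Frobenius number, the paper deduces divergence from the (hard) divergence of $\expect{e(\nsg)}$ via $e(S)-1\le\min(S\setminus\{0\})-1\le g(S)\le F(S)$ (Corollary~\ref{c:embdim}); your observation that $\min\gens\to\infty$ in probability already forces $\expect{g},\expect{F}\to\infty$ gets these two invariants without ever touching the embedding dimension.

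The genuine gap is exactly where you flagged it: the divergence of $\expect{e(\nsg)}$ when $1/M\ll p\ll 1$, which is the heart of the theorem and the reason the paper builds $\Delta_n$ at all. First, note your window length is not a convenience but a necessity: a window of length $O(1/p)$ on which $q_n\ge 1/2$ yields only $\expect{e(\nsg)}\gtrsim\mathrm{const}$, so you truly need $q_n$ bounded below for $n$ up to some $\omega(1/p)$, and that is precisely what you do not prove. Your union bound covers only the two extreme representation types: all parts distinct (the injection $B\mapsto B\setminus\{\max B\}$ giving $p[(1+p)^{n-1}-1]$, which is fine) and a single repeated part $n=ca$ (giving $d(n)p$). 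The intermediate multiset types, say $n=c_1a_1+\cdots+c_ka_k$ with $k\ge 2$ distinct parts and some $c_i\ge 2$, are dismissed as ``lower order,'' but they are not obviously so: weighting each multiset by $p^{\#\mathrm{distinct}}$, the number of partitions of $n$ with exactly $k$ distinct part sizes carries divisor-sum factors (already at $k=2$ one has $\sum_{x<n}d(x)d(n-x)\asymp n(\ln n)^2$), giving counts of order $n^{k-1}(C\ln n)^k/((k-1)!\,k!)$; on your window $n\asymp p^{-1}\ln(1/p)$ this series over $k$ peaks at $k\asymp\ln(1/p)$ with peak term of size a power of $1/p$ whose sign depends on the constant $C$, so the naive first moment over all representations can genuinely diverge (indeed the expected number of representations is dominated by rare events such as $1\in\gens$). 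Salvaging the argument requires organizing multisets by number of distinct parts \emph{and} truncating by set-count bounds for large $k$, with divisor estimates throughout --- substantive work absent from the proposal. The paper sidesteps all of this: Proposition~\ref{p:anp} gives the \emph{exact} identity $a_n(p)=(1-p)^{\lfloor n/2\rfloor}h_n(p)$ (no union bound is ever taken), and the $h$-vector lower bound of Theorem~\ref{t:bounds}, coming from generating sets inside the intervals $(n/(j+1),n/j)$, produces $\liminf\expect{e(\nsg)}\ge\sum_{j=2}^N\frac{2j(j+1)}{j(j+1)-2}\ge 2(N-1)$ for every $N$ --- divergence by summing a contribution of more than $2$ from each $j$, rather than from one long window.

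Two smaller issues in part (c). Your pairing bound $\prob{n\notin\nsg}\le(1-p^2)^{\lfloor(n-1)/2\rfloor}$ requires both members of each pair $\{a,n-a\}$ to be at most $M$, so it degrades for $n>M$ and is vacuous for $n\ge 2M$; the contribution of gaps above $M$ needs a separate argument, which is what Lemma~\ref{l:gapsbelowm} supplies in the paper. And your closing claim that each limit \emph{is} a rational function of $p$ is unjustified: nothing forces $\lim_M\sum_{n\le M}a_n(p)$ to be rational in $p$, and neither the theorem nor the paper asserts this --- only bounds by explicit rational functions are claimed, which your geometric series (for the upper bounds) do deliver once the tail is handled.
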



Although part~(a) of Theorem~\ref{t:maintheorem} follows from standard arguments in probabilistic combinatorics (Theorem~\ref{t:cofinite}), parts~(b) and~(c) follow, surprisingly, from the construction of a very natural shellable simplicial complex (Definition~\ref{d:complexofirreducibles}) whose facets are in bijection with irreducible numerical semigroups of a fixed Frobenius number (Definition~\ref{d:irreducible}).  As it turns out, some of the probabilities involved in determining the expected values above require precisely the $h$-vector (in the sense of algebraic combinatorics \cite{stanleybook}) for this simplicial complex.  Through the $h$-vector, we distinguish parts~(b) and~(c) of Theorem~\ref{t:maintheorem} (Corollary~\ref{c:embdim}) and estimate the finite expectations (Theorem~\ref{t:fixedpbounds}).  

\subsection*{Acknowledgements}

The authors would like to thank Iskander Aliev,  Martin Henk, Calvin Leng, and Pedro Garc{\'\i}a-Sanchez for several helpful conversations and suggestions, and are grateful to Zachary Spaulding for assisting with the experiments in Table~\ref{tb:fixedpbounds}.  The first and second author were partially supported by NSF grant DMS-1522158 to the University of California\ Davis. The first author was also partially supported by NSF grant DMS-1440140, while he visited the Mathematical Sciences Research Institute in Berkeley, California, during the Fall 2017 semester.  The third author was supported by NSF collaborative grant DMS-1522662 to Illinois Institute of Technology.


\section{Background}
\label{sec:background}


We begin by recalling some basic notions from probability theory and establishing notation used throughout the paper.  For a more comprehensive resource on methods in probabilistic combinatorics, we refer the reader the excellent book of Alon and Spencer \cite{AlonSpencer}.  The \emph{expected value} of a discrete random variable $X$ taking values in $\Omega\subseteq\mathbb{N}$ is $\expect{X}=\sum_{n\in\Omega}n\cdot \prob{X=n}$, and the \emph{variance} of $X$ is given by $\var{[X]}=\expect{X^2}-\expect{X}^2$.  The~most useful property of the expectation operator is its linearity: $\expect{aX+bY}=a\expect{X}+b\expect{Y}$ for random variables $X,Y$ and scalars $a,b$. The \emph{covariance} of two random variables $X,Y$ is given by $\cov{[X,Y]}=\expect{XY}-\expect{X}\expect{Y}$.  The variance of a sum of random variables $X=X_1+\ldots+X_n$ can be computed via the formula 
\[
\var{[X]}=\sum_{i=1}^n\var{[X]}+\sum_{i<j}\cov{[X_i,X_j]}.
\]  The \emph{indicator random variable} $1_E$ of an event $E$ takes the value 1 if $E$ occurs and is 0 otherwise.  If~$E$ occurs with probability $p$, then $\expect{1_E}=p$ and $\var{[1_E]}=p(1-p)$.  If $E$ depends on a parameter~$n$ and $\lim_{n\to\infty}\prob{E}=1$, we say that $E$ holds \emph{asymptotically almost surely} (abbreviated \emph{a.a.s.}).

In Theorem~\ref{t:cofinite}, we will bound the probability that a nonnegative integer random variable~$X$ is non-zero using the so-called \emph{moment method} techniques.  The \emph{first moment method}, a consequence of Markov's inequality~\cite{AlonSpencer}, says that the probability $X$ is non-zero is bounded above by its expectation, i.e. $\prob{X\not=0}\le\expect{X}.$   On the other hand, the \emph{second moment method} provides an upper bound for the probability that $X$ is zero in terms of its variance, $\prob{X=0}\le \var{[X]}/\expect{X}^2$, and follows from Chebyshev's inequality \cite[Theorem 4.1.1]{AlonSpencer}.

For functions $f,g$ depending on some parameter $n$, we use the notation $f\gg g$ to indicate that the ratio $g/f\to 0$ as $n\to\infty$ and similarly $f\ll g$ means that $f/g\to 0$ as $n\to\infty$.  With this, clearly $f\ll 1$ implies $f\to 0$ as $n\to\infty$.


In the theory of Erd\H os-R\'enyi random graphs, many graph properties tend to appear or not appear with high probability based on the asymptotics of the probability parameter $p$.  This phenomenon is quantified by the notion of a \emph{threshold function} (see \cite[Chapter 10]{AlonSpencer}).  Here we define a notion of threshold function tailored to the context of random numerical semigroups.  We say that a property $\mathcal{P}$ of a numerical semigroup $S$ is \emph{monotone} if $S$ has $\mathcal{P}$ and if $S\cup\{s\}$ for $s\in\mathbb{N}$ is still a numerical semigroup, then $S\cup\{s\}$ also has $\mathcal{P}$.   For example, the property of being cofinite is monotone, whereas the property of having Frobenius number $n$ is not.  A \emph{threshold function} for a monotone numerical semigroup property $\mathcal{P}$ is a function $t(M)$ such that
\[
\prob{\mathcal{S}\text{ has }\mathcal{P}}=
\begin{cases}
1,&\text{if }p(M)\gg t(M)\\
0,&\text{if } p(M)\ll t(M)\\
\end{cases}
\] for $\nsg\sim S(M,p)$.  Loosely speaking, if $t$ is a threshold for a property $\mathcal{P}$, when $p$ is much smaller than $t$, then $\nsg\sim S(M,p)$ will not have $\mathcal{P}$ a.a.s., and if $p$ is much larger than $t$, $\nsg$ will have $\mathcal{P}$ a.a.s.



\section{Distribution and cofiniteness}
\label{sec:cofinite}




In this section, we prove that the threshold function for cofiniteness coincides with the threshold function for nonemptyness (Theorem~\ref{t:cofinite}).  First, we give Theorem~\ref{t:distribution}, which states the probability of observing a fixed numerical semigroup in terms of its embedding dimension and gaps.  

\begin{notation}\label{n:gapsbelowm}
In what follows, we denote by $g_M(S)$ the number of gaps $n$ of $S$ such that $n \le M$.  
\end{notation}

\begin{thm}\label{t:distribution}
Let $\nsg\sim S(M,p)$ and let $S$ be a numerical semigroup with minimal generating set $A \subset [M]$.  Then, 
\[
	\prob{\nsg=S}= p^{e(S)}(1-p)^{g_M(S)}.
\]  
If, in addition, $F(S)\le M$, then
\[
	\prob{\nsg=S}= p^{e(S)}(1-p)^{g(S)}.
\]  
\label{thm:dist}
\end{thm}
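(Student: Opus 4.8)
The plan is to reduce the event $\{\nsg = S\}$ to a statement about the independent coin flips that determine which integers $n \in [M]$ land in $\gens$, and then to exploit that independence directly. First I would partition the integers $n$ with $1 \le n \le M$ into three classes according to their role in the fixed semigroup $S$: the minimal generators of $S$ (the set $A$, of which there are $e(S)$), the gaps of $S$ lying below $M$ (of which there are $g_M(S)$), and the remaining elements, namely those $n \in S \setminus A$ with $n \le M$. Each such integer is placed in $\gens$ independently with probability $p$ (and $0 \in \gens$ always, which is irrelevant), so the whole computation hinges on showing that whether $\nsg = S$ is governed entirely by the first two classes while the third is free.

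The heart of the argument is the equivalence: $\nsg = S$ if and only if (i) every element of $A$ is chosen and (ii) no gap of $S$ with $n \le M$ is chosen. For the forward direction, if $\nsg = S$ then each gap $n \le M$ must be excluded, since otherwise $n \in \gens \subseteq \langle \gens \rangle = \nsg = S$ would contradict $n$ being a gap; and each minimal generator must be chosen, because the minimal generating set of a numerical semigroup is contained in every generating set, and applying this to $\nsg = \langle \gens \rangle$ together with $\nsg = S$ forces $A \subseteq \gens$. For the converse, if (i) holds then $\nsg = \langle \gens \rangle \supseteq \langle A \rangle = S$, and if (ii) holds then every chosen $n \le M$ lies in $S$ (a chosen $n \notin S$ would be an excluded gap $\le M$), so $\gens \subseteq S$ and hence $\nsg \subseteq S$ since $S$ is closed under addition.

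Once the equivalence is established, the probability is immediate: events (i) and (ii) depend on disjoint sets of the independent Bernoulli variables, so
$$\prob{\nsg = S} = \prob{A \subseteq \gens}\cdot\prob{\gens \cap G(S) \cap [M] = \nothing} = p^{e(S)}(1-p)^{g_M(S)},$$
where the coin flips on the third class are marginalized away (their two outcomes contribute a factor of $1$). For the second formula I would simply observe that when $F(S) \le M$ every gap of $S$ is at most $F(S) \le M$, so that $g_M(S) = g(S)$, yielding $\prob{\nsg = S} = p^{e(S)}(1-p)^{g(S)}$.

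The hard part will be the forward direction of the equivalence, specifically the claim that all of $A$ is necessarily chosen. This rests on the fact that a minimal generator cannot be written as a sum of two nonzero elements of the semigroup, so it cannot be produced from $\gens$ unless it already belongs to $\gens$; making this rigorous requires the uniqueness of the minimal generating set and the observation that all of $A$ lies in $[M]$, which is exactly the standing hypothesis $A \subset [M]$. Everything else is bookkeeping on the independent choices.
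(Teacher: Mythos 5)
Your proposal is correct and follows essentially the same route as the paper: both hinge on the equivalence that $\nsg = S$ holds exactly when $A \subseteq \gens$ and no gap of $S$ below $M$ is chosen, with the elements of $(S \setminus A) \cap [M]$ playing no role. The only difference is cosmetic: where you factor the probability directly via independence of the coin flips on disjoint index sets, the paper sums $\prob{\gens = B}$ over all compatible sets $B$ and collapses the sum with the binomial theorem---the same marginalization in different clothing.
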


\begin{proof}
First, observe that $\nsg = S$ if and only if $\gens \supset A$ and no gap $g$ of $\nsg$ is in $\gens$. Let $\mathfrak{A}$ consist of the elements of $[M]$ that are neither minimal generators nor gaps of $S$, i.e. $\mathfrak{A}:=(S\setminus A)\cap[M]$.  
Now, for any $B\subset [M]$ such that $B\supset A$ and $B\cap G(S)=\emptyset$, one may write $B$ as a disjoint union $B=A\cup C$, where $C\in\mathfrak{A}$.  Since $e(S)=|A|$,
\[
\prob{\gens = B}=p^{|B|}(1-p)^{M-|B|}=p^{e(S)}p^{|C|}(1-p)^{g_M(S)}(1-p)^{M-e(S)-g_M(S)-|C|}.
\] Hence, 
\[
\prob{\nsg=S}=\sum_{\substack{B\supset A\\ B\cap G(S)=\emptyset}}\prob{\gens = B}=p^{e(S)}(1-p)^{g_M(S)}\sum_{C\in\mathfrak{A}}p^{|C|}(1-p)^{M-e(S)-g_M(S)-|C|}.
\]  By the binomial theorem, $\sum_{C\in\mathfrak{A}}p^{|C|}(1-p)^{M-e(S)-g_M(S)-|C|}=1,$ which completes the proof of the first claim.  The second claim follows since if $F(S)\le M$, then $g_M(S)=g(S)$.
\end{proof}

\begin{lm}\label{l:zero}
Let $\nsg\sim S(M,p)$.  If $p\ll 1/M,$ then $\nsg=\langle0\rangle$ a.a.s.
\label{thm:zero}
\end{lm}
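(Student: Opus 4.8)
The plan is to apply the first moment method recalled in the Background. The key observation is that $\nsg = \langle 0\rangle$ holds precisely when the random generating set $\gens$ contains no element of $[M] = \{1,\dots,M\}$: selecting any $n \ge 1$ forces $n \in \nsg$, so $\nsg \neq \{0\} = \langle 0\rangle$, whereas selecting nothing leaves $\gens = \{0\}$ and hence $\nsg = \langle 0\rangle$. So the whole statement reduces to controlling the probability that at least one generator is drawn.

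With this reformulation, I would introduce the counting random variable $X = \sum_{n=1}^{M} 1_{\{n \in \gens\}}$, which records how many elements of $[M]$ land in $\gens$. Since each $n$ is included independently with probability $p$, linearity of expectation gives $\expect{X} = Mp$. The event $\nsg \neq \langle 0\rangle$ is exactly the event $X \neq 0$, so the first moment method yields
\[
\prob{\nsg \neq \langle 0\rangle} = \prob{X \neq 0} \le \expect{X} = Mp.
\]
To finish, I would translate the hypothesis: the notation $p \ll 1/M$ means precisely that $Mp \to 0$ as $M \to \infty$, so the displayed bound forces $\prob{\nsg \neq \langle 0\rangle} \to 0$, i.e.\ $\prob{\nsg = \langle 0\rangle} \to 1$. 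This is exactly the claim that $\nsg = \langle 0\rangle$ a.a.s.

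I do not expect any serious obstacle here; the argument is essentially a one-line application of Markov's inequality once the combinatorial reformulation is made, and the only point requiring a moment's care is matching the asymptotic hypothesis $p \ll 1/M$ to the quantity $\expect{X} = Mp$, which the definition of $\ll$ handles directly. As an alternative, one could instead compute $\prob{\nsg = \langle 0\rangle} = (1-p)^M$ directly from Theorem~\ref{t:distribution} applied to $S = \langle 0\rangle$ (for which $e(S) = 0$ and $g_M(S) = M$) and verify $(1-p)^M \to 1$ through the estimate $\log\big((1-p)^M\big) = M\log(1-p) \sim -Mp \to 0$; but the first moment approach is cleaner and previews the moment-method techniques used for cofiniteness in Theorem~\ref{t:cofinite}.
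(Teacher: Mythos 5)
Your proposal is correct and matches the paper's own argument: the paper likewise computes $\expect{|\gens|} = pM \to 0$ and applies Markov's inequality (the first moment method), exactly as you do with your counting variable $X$. Your write-up just makes explicit the reformulation $\nsg = \langle 0\rangle \iff X = 0$ that the paper leaves implicit.
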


\begin{proof} The condition $p\ll 1/M$ implies $pM\to0$ as $M\to\infty$.  Thus, \mbox{$\mathbb{E}[|\gens|]=pM\to0\text{ as } M\to\infty$} and the result follows by applying Markov's inequality.
\end{proof}

The proof of the threshold function for cofiniteness relies on counting the number of coprime pairs in the random set $\gens$.  

\begin{thm}\label{t:cofinite}
The function $t(M)=1/M$ is a threshold function for the property that $\nsg\sim S(M,p)$ is cofinite.
\end{thm}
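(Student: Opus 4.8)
The plan is to verify the two halves of the threshold statement separately, after first reducing cofiniteness to a condition on $\gcd(\gens)$. Recall that the numerical semigroup generated by $\gens$ is cofinite precisely when $\gcd(\gens) = 1$: if every generator were divisible by some $d > 1$ then no integer coprime to $d$ could lie in $\nsg$, while conversely a single coprime pair of generators already forces cofiniteness. For the subthreshold regime $p \ll 1/M$ there is nothing new to prove: Lemma~\ref{l:zero} gives $\nsg = \langle 0\rangle$ a.a.s., and $\langle 0\rangle$ is not cofinite, so $\prob{\nsg \text{ is cofinite}} \to 0$. The real content is the superthreshold direction $p \gg 1/M$, where I must show $\gcd(\gens) = 1$ a.a.s.

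For this I would count coprime pairs. Let $X = \sum_{\{i,j\}} \indij$, where the sum ranges over unordered pairs $\{i,j\} \subseteq [M]$ with $\gcd(i,j) = 1$ and $\indij$ indicates that both $i$ and $j$ are selected for $\gens$. If $X > 0$ then $\gens$ contains a coprime pair and $\nsg$ is cofinite, so it suffices to prove $X > 0$ a.a.s., which I would establish via the second moment method $\prob{X = 0} \le \var{[X]}/\expect{X}^2$. The first moment is immediate: $\expect{X} = p^2\,c(M)$, where $c(M)$ is the number of coprime pairs in $[M]$. An elementary union bound over primes gives $c(M) = \Theta(M^2)$ — the number of non-coprime pairs is at most $\frac{M^2}{2}\sum_{p}p^{-2} < \frac{M^2}{2}(\pi^2/6 - 1) < \binom{M}{2}$, leaving a positive proportion coprime — so $\expect{X} = \Theta(p^2 M^2) \to \infty$ since $pM \to \infty$.

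The variance is the genuine work. Writing $\var{[X]} = \sum \var{[\indij]} + \sum \cov{[\indij,\indijprime]}$, I observe that disjoint pairs are independent and contribute zero covariance; the diagonal terms sum to at most $\expect{X}$; and the only surviving covariances come from two coprime pairs sharing exactly one index, each contributing $p^3 - p^4 \le p^3$, with $O(M^3)$ such configurations. Hence $\var{[X]} \le \expect{X} + O(p^3 M^3)$, and
\[
\frac{\var{[X]}}{\expect{X}^2} \le \frac{1}{\expect{X}} + \frac{O(p^3 M^3)}{\Theta(p^4 M^4)} = O\!\left(\frac{1}{(pM)^2}\right) + O\!\left(\frac{1}{pM}\right) \to 0,
\]
using $pM \to \infty$. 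I expect the main obstacle to be the bookkeeping in this variance estimate — specifically, securing the $\Theta(M^2)$ lower bound on coprime pairs cleanly and confirming that the shared-index covariance count is genuinely $O(M^3)$ and not larger — but once these asymptotics are in hand the second moment method closes the argument.
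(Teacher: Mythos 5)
Your proposal is correct and follows essentially the same route as the paper's proof: Lemma~\ref{l:zero} disposes of the regime $p \ll 1/M$, and the regime $p \gg 1/M$ is handled by counting coprime pairs in $\gens$ and applying the second moment method to conclude $\prob{X>0}\to 1$. If anything, your version is slightly more careful than the paper's, which bounds $\expect{X} \le M^2p^2$ but then tacitly uses a matching lower bound $\expect{X} = \Theta(p^2M^2)$ in the denominator of $\var[X]/\expect{X}^2$; your union bound over primes supplies exactly the $\Theta(M^2)$ count of coprime pairs that this step needs.
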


\begin{proof} 
By Lemma~\ref{l:zero}, when $p\ll 1/M,$ $\nsg$ is the non-cofinite numerical semigroup $		\langle 0\rangle$ a.a.s.  Consider the case when $p\gg 1/M.$  For each pair $\{i,j\}$ such that $1\le i<j\le M,$ define the indicator random variable $\indij$ as follows:
	
\[
	\indij:=
	\begin{cases}
		1,&\text{if } i,j\in \gens \\
		0,&\text{otherwise}.\\
	\end{cases}
\]
Let $X$ be the the number of pairs of coprime integers in $\gens$, so that
\[
	X=\sum_{\substack{ i<j \\ (i,j)=1}}\indij.
\]
 By the basic properties of indicator random variables,  $\mathbb{E}[\indij]=p^2 \text{ and } \var[\indij]=p^2(1-p^2).$  If $\{i,j\}\cap\{i',j'\}=\emptyset$, $\cov[\indij,\indijprime]=0,$ since the events $\indij=1$ and $\indijprime=1$ are independent.  If $i=i'$ and $j\not=j',$ 
\[
	\cov[\indij,1_{\{i,j'\}}]=\mathbb{E}[\indij\cdot1_{\{i,j'\}}]-\mathbb{E}[\indij]\mathbb{E}[1_{\{i,j'\}}]=p^3(1-p).
\]
Thus,
\[
	\mathbb{E}[X]=\sum_{\substack{i<j\\ (i,j)=1}}p^2\le M^2p^2,
\] 
and
\[
	\var[X]=\sum_{\substack{i<j \\ (i,j)=1}}p^2(1-p^2)\ +\sum_{\substack{i<j<j'  \\ (i,j)=1 \text{ and } (i,j')=1}}	p^3(1-p)\le M^2p^2(1-p^2)+M^3p^3(1-p).
\]
Hence, 
\[
\frac{\var[X]}{\mathbb{E}[X]^2}\le \frac{M^2p^2(1-p^2)+M^3p^3(1-p)}{M^4p^4}\to0\ \ \text{ as }M\to\infty.  
\] By the second moment method, $\prob{X>0}\to1$ and thus when $p\gg1/M$, 
$\gens$ will contain a pair of coprime integers a.a.s, which guarantees that $\nsg$ is cofinite a.a.s. in this case.
\end{proof}


\section{The simplicial complex of irreducible semigroups}
\label{sec:complex}

Before proving the remaining parts of Theorem~\ref{t:maintheorem}, we introduce in Definition~\ref{d:complexofirreducibles} a simplicial complex whose combinatorial properties govern several questions arising from the ER-type model for sampling random numerical semigroups.  We prove that this complex is shellable (Proposition~\ref{p:shellable}), in the process uncovering a combinatorial interpretation of its $h$-vector entries (Corollary~\ref{c:hvector}).  We~begin by recalling the definition of a simplicial complex and some related concepts, as presented in \cite[Chapter~2]{stanleybook}.

\begin{defn}[{\cite[Chapter~2]{stanleybook}}]\label{d:simplicialcomplex}
Fix $n \in \ZZ_{\ge 1}$.  A \emph{simplicial complex} with vertices $[n] = \{1, \ldots, n\}$ is a collection $\Delta$ of subsets of $[n]$ (called \emph{faces}) such that $A \in \Delta$ implies $B \in \Delta$ whenever $B \subset A$.  The \emph{dimension} of a face $A$ is $\dim(A) = |A| - 1$ and the \emph{dimension} $d = \dim(\Delta)$ of $\Delta$ is the largest dimension among the faces of $\Delta$.  The \emph{facets} of $\Delta$ are the maximal faces with respect to~containment, and $\Delta$ is \emph{pure} if its facets all have the same dimension.  The \emph{$f$-vector} $(f_{-1}, f_0, \ldots, f_d)$ of $\Delta$ has entries giving the number
$$f_i = \#\{A \in \Delta : |A| = i+1\}$$
of $i$-dimensional faces of $\Delta$, and the \emph{$h$-vector} $(h_0, h_1, \ldots)$ of $\Delta$ has entries expressed as
$$h_i = \sum_{j = 0}^i (-1)^{i-j} \binom{d - j}{i - j}f_{j-1}$$
in terms of the $f$-vector.  
\end{defn}

\begin{defn}[{\cite[Chapter~2]{rosales+pedro}}]\label{d:irreducible}
A numerical semigroup $S$ is \emph{irreducible} if it is maximal (with respect to containment) among numerical semigroups with the same Frobenius number $F(S)$.  
\end{defn}

We now define the simplicial complex $\Delta_n$ whose facets are in natural bijection with irreducible numerical semigroups with Frobenius number $n$.

\begin{defn}\label{d:complexofirreducibles}
Fix $n \ge 2$, and let $S_1, \ldots, S_r$ denote the irreducible numerical semigroups with Frobenius number $n$.  Define $\Delta_n$ as the simplicial complex on $[n-1]$ with facets $F_i = S_i \cap [n-1]$, and let $d_n = \deg h_n(x)$, where
$$h_n(x) = h_{n,0} + h_{n,1}x + \cdots$$
denotes the polynomial in $x$ whose coefficients are the $h$-vector $(h_{n,0},h_{n,1},\ldots)$ of $\Delta_n$.  
\end{defn}

\begin{lm}\label{l:symmetric}
A numerical semigoup $S$ with Frobenius number $F(S) = n$ is irreducible if and only if $n - s \notin S$ implies $s \in S$ for $s < n/2$.  In particular, $\Delta_n$ is pure of dimension $\lfloor (n-1)/2 \rfloor$.  
\end{lm}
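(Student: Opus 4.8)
The plan is to reformulate the stated condition, prove the resulting equivalence in two directions, and then deduce purity by a counting argument. First observe that for any $0 < s < n$ at most one of $s$ and $n-s$ can lie in $S$: if both did, then $n = s + (n-s) \in S$, contradicting $F(S) = n$. Consequently the condition ``$n-s \notin S$ implies $s \in S$ for $s < n/2$'' is equivalent to the statement $(\star)$ that \emph{exactly} one element of each pair $\{s, n-s\}$ with $0 < s < n/2$ belongs to $S$ (and, when $n$ is even, $n/2 \notin S$ automatically, by the same argument). I will show that $S$ is irreducible if and only if $(\star)$ holds.

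For the implication $(\star) \Rightarrow$ irreducible, I would argue by contradiction. Suppose some numerical semigroup $T \supsetneq S$ also has Frobenius number $n$, and pick $x \in T \setminus S$. Since $x$ is a gap of $S$ we have $0 < x < n$; if $x = n/2$ then $n = 2x \in T$, which is impossible, and otherwise $(\star)$ applied to the pair $\{x, n-x\}$ forces $n - x \in S \subseteq T$, whence $n = x + (n-x) \in T$, again contradicting $F(T) = n$. Thus $S$ admits no proper enlargement with the same Frobenius number, i.e.\ $S$ is maximal.

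The converse is the crux, and I expect it to be the main obstacle: assuming $(\star)$ fails I must \emph{construct} a strictly larger numerical semigroup with the same Frobenius number. The key lies in the choice of element to adjoin. Let $x$ be the \emph{largest} integer with $0 < x < n$, $x \notin S$, and $n - x \notin S$; such an $x$ exists because $(\star)$ fails, and since this defining condition is symmetric under $y \mapsto n - y$, the partner of any element below $n/2$ again belongs to the set, forcing $x > n/2$ and hence $2x > n$, so $2x \in S$. I then claim $T := S \cup \{x\}$ is a numerical semigroup. The only closures to verify are $x + a$ for $a \in S$: when $a = 0$ this is $x$; when $x + a > n$ it lies in $S$; the case $x + a = n$ cannot occur, since it would require $a = n - x \in S$ whereas $n - x \notin S$; and if $0 < x + a < n$ with $x + a \notin S$, then maximality of $x$ would give $n - (x+a) \in S$, so that $n - x = a + (n - x - a) \in S$, contradicting the choice of $x$. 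Hence $x + a \in S$ in every case, $T$ is a numerical semigroup, and since $x < n$ it retains Frobenius number $n$; this contradicts maximality, so $(\star)$ must hold.

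Finally, purity follows directly from $(\star)$. Each facet $F_i = S_i \cap [n-1]$ is obtained by selecting exactly one element from each symmetric pair $\{s, n-s\}$ with $0 < s < n/2$ (with $n/2$, when $n$ is even, never selected). The number of such pairs is independent of which irreducible semigroup $S_i$ one takes, so all facets have equal cardinality; hence $\Delta_n$ is pure, and counting the pairs yields the claimed dimension $\lfloor (n-1)/2 \rfloor$.
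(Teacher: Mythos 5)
Your proof is correct, but it takes a genuinely different route from the paper. The paper disposes of the equivalence in one sentence by citing a known characterization of irreducible numerical semigroups (\cite[Proposition~3.4]{rosales+pedro}): a semigroup with Frobenius number $n$ is irreducible exactly when it is symmetric ($n$ odd) or pseudo-symmetric ($n$ even), which is the condition in the lemma; purity then follows from the resulting bijection between gaps of $S$ and elements of $S$ below $n$. You instead prove the characterization from scratch: the forward direction via the pairing $\{s, n-s\}$ (at most one member of each pair can lie in $S$, else $n \in S$), and the converse via an explicit enlargement --- adjoining the \emph{largest} $x$ with $x \notin S$ and $n-x \notin S$, and verifying that $S \cup \{x\}$ is closed under addition with Frobenius number still $n$. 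All the verification steps check out; in particular the maximality of $x$ is exactly what rescues the case $0 < x + a < n$, and the symmetry of the defining condition under $y \mapsto n-y$ correctly forces $x > n/2$, so $2x \in S$. This is essentially the standard proof of the cited result, so your argument is a legitimate self-contained replacement: it costs a page of case analysis but buys independence from the reference, and the ``adjoin/swap an element'' construction is the same device the paper itself uses later in Proposition~\ref{p:shellable} and Theorem~\ref{t:hvector}. Your purity argument coincides with the paper's.

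One shared blemish worth noting: each facet has \emph{cardinality} $\lfloor (n-1)/2 \rfloor$ (one vertex per pair $\{s, n-s\}$ with $0 < s < n/2$), so its dimension as a face is $\lfloor (n-1)/2 \rfloor - 1$. Your closing sentence, like the statement of the lemma itself, labels the pair count as the dimension; for instance $\Delta_7$ has facets of size $3$ (see Figure~\ref{f:shellingexample}), hence dimension $2$, whereas $\lfloor 6/2 \rfloor = 3$. This off-by-one is inherited from the paper's statement and is not a flaw in your reasoning.
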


\begin{proof}
The first claim follows from \cite[Proposition~3.4]{rosales+pedro}, and yields a bijection between the gaps of~$S$ (excluding $n/2$) and the elements of $S$ less than $n$.  The second claim now follows.  
\end{proof}

\begin{remark}\label{r:irreduciblesemigroupgens}
Lemma~\ref{l:symmetric} implies that for an irreducible numerical semigroup $S$, the set of minimal generators less than $n/2$ (so long as it is nonempty) uniquely determines~$S$.  In particular, the minimal generators determine which integers less than $n/2$ lie in $S$, and~the fact that $i \in S$ if and only if $n-i \notin S$ determines the remainder of the gaps of $S$.  
\end{remark}

\begin{defn}[{\cite[Definition~2.1]{stanleybook}}]\label{d:shellable}
A \emph{shelling order} of a pure simplicial complex $\Delta$ is a total ordering $F_1, \ldots, F_r$ of the facets of $\Delta$ so that for every $i > 1$, the complex $\{F_i \cap F_1, \ldots, F_i \cap F_{i-1}\}$ is pure of dimension $\dim(\Delta) - 1$.  We say $\Delta$ is \emph{shellable} if it has a shelling order.  
\end{defn}

\begin{prop}\label{p:shellable}
Fix $n \ge 1$, let $S_1, \ldots, S_r$ denote the irreducible numerical semigroups with Frobenius number $n$, and let $F_i = S_i \cap [n-1]$ be the facet of $\Delta_n$ corresponding to $S_i$.  If~$\sum F_i \ge \sum F_j$ for all $i < j$, then $F_1, \ldots, F_r$ is a shelling order for $\Delta_n$.  
\end{prop}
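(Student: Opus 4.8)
The plan is to exploit the encoding of facets supplied by Lemma~\ref{l:symmetric} and Remark~\ref{r:irreduciblesemigroupgens}: for each $s$ with $1\le s<n/2$ exactly one of $s$ and $n-s$ lies in $F_i=S_i\cap[n-1]$ (and $n/2\notin F_i$ when $n$ is even), so a facet amounts to a choice of one element from each ``symmetric pair'' $\{s,n-s\}$, subject to the resulting set being a numerical semigroup. For a vertex $v$ of a facet $F$, write $\mathrm{flip}_v(F)$ for the set obtained by replacing $v$ with $n-v$; this is again a facet precisely when the corresponding set of integers is an irreducible numerical semigroup. Setting $d=\dim\Delta_n$, so $|F_j|=d+1$, I reduce Definition~\ref{d:shellable} to the standard criterion: the complex $\{F_j\cap F_1,\dots,F_j\cap F_{j-1}\}$ is pure of dimension $d-1$ if and only if every $F_j\cap F_m$ ($m<j$) is contained in some $F_j\cap F_{m'}$ of size $d$. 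Any such size-$d$ intersection is a codimension-one face $F_j\setminus\{v\}$, and since facets choose one element per pair, $F_{m'}\supseteq F_j\setminus\{v\}$ with $v\notin F_{m'}$ forces $F_{m'}=\mathrm{flip}_v(F_j)$. Thus the task becomes: for each $m<j$, find $v\in F_j\setminus F_m$ with $\mathrm{flip}_v(F_j)$ a facet occurring before $F_j$.

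Next I record how the ordering interacts with flips. Writing $\sum F$ for the sum of the elements of a facet, replacing a small element $v<n/2$ by its partner $n-v$ changes the sum by $n-2v>0$. Hence flipping a small element strictly increases $\sum F$, and since the facets are listed with weakly decreasing sums, a strictly larger sum forces a strictly earlier index; so the flipped facet, when valid, necessarily precedes $F_j$. It therefore suffices to flip a small vertex of $F_j$ that is absent from $F_m$ and to show one such flip is valid.

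The crux, and the step I expect to be the main obstacle, is the following closure lemma: if $v<n/2$ is a minimal generator of the irreducible semigroup $S_j$, then $\mathrm{flip}_v(S_j)$ is again an irreducible numerical semigroup with Frobenius number $n$. By Lemma~\ref{l:symmetric} only closure under addition must be verified for $T=(S_j\setminus\{v\})\cup\{n-v\}$, since $T$ is symmetric by construction, contains every integer exceeding $n$, and omits $n$. I will run through the sum types: two small elements sum to an element of $S_j$, which lies in $T$ unless it equals the removed vertex $v$; a small element $w<v$ added to the new element $n-v$ gives $n-(v-w)$, which lies in $T$ unless $v-w\in S_j$; in both exceptional cases $v$ would be a sum of two positive elements of $S_j$, contradicting minimality of $v$. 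All remaining sums either exceed $n$ or reproduce elements of $S_j$, and symmetry (together with $n/2\notin S_j$) rules out any sum equal to $n$ or to $n/2$. Thus minimality of $v$ is exactly the hypothesis that makes $T$ closed.

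Finally I produce the required vertex. Given $m<j$, let $v=\min\{s<n/2:s\in S_j\setminus S_m\}$. A weight comparison shows this set is nonempty: were it empty, $F_j$ and $F_m$ could differ only on pairs where $F_j$ takes the large element, forcing $\sum F_j>\sum F_m$ and contradicting $m<j$. Moreover $v$ is a minimal generator of $S_j$: if $v=a+b$ with $a,b\in S_j$ positive, then $a,b<v$ are small elements of $S_j$, and being below the minimum of $S_j\setminus S_m$ they lie in $S_m$, whence $v=a+b\in S_m$, a contradiction. By the closure lemma $\mathrm{flip}_v(F_j)$ is a facet with strictly larger sum, hence equals $F_{m'}$ for some $m'<j$, and since $v\notin F_m$ we obtain $F_j\cap F_m\subseteq F_j\setminus\{v\}=F_j\cap F_{m'}$. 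This verifies the criterion for every $m<j$, so the listed order is a shelling.
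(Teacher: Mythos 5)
Your proposal is correct, and at bottom it follows the same route as the paper: both reduce shellability to exhibiting, for each earlier facet, a codimension-one intersection containing the given one, and both exhibit it by flipping one small element across the symmetry $s \mapsto n-s$. Indeed (translating your $(F_j,F_m)$ to the paper's $(F_i,F_j)$), under the ordering hypothesis the paper's $a=\min(F_i\setminus F_j)$ is automatically less than $n/2$ --- this needs the same sum comparison you use for nonemptiness, a point the paper leaves implicit --- so it coincides with your $v$, and by symmetry the paper's $b=\max(F_j\setminus F_i)$ equals $n-a$, making the paper's $S_k=S_i\cup\{b\}\setminus\{a\}$ exactly your $\mathrm{flip}_v(S_i)$. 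The one substantive difference is how closure under addition of the flipped set is verified. The paper disposes of it in one line, asserting that the elements of $S_i\cup\{b\}\setminus\{a\}$ greater than $b$ ``are identical to those of $S_j$,'' and that assertion is false in general: for $n=23$, the semigroups $S_i=\langle 6,9,10\rangle$ and $S_j=\langle 5,12,14,16\rangle$ are both irreducible with Frobenius number $23$ and satisfy $\sum F_j=171>168=\sum F_i$ (so $j<i$), yet here $a=6$, $b=17$, and the flipped set contains $18=9+9>b$ while $18\notin S_j$; this discrepancy occurs whenever the smallest element on which the two facets differ lies in $F_j$ rather than $F_i$. (The flipped set is still closed, so the paper's construction and the proposition survive; only one of the two inclusions behind that sentence is true, and a case analysis of the sums involving $b$ is genuinely needed.) Your standalone closure lemma --- flipping a minimal generator $v<n/2$ of an irreducible semigroup yields another irreducible semigroup with Frobenius number $n$ --- supplies exactly that case analysis without ever referring to $S_j$, and it is also the fact the paper silently reuses in the proof of Theorem~\ref{t:hvector}. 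So: same approach, but your write-up is airtight at precisely the spot where the paper's is loose.
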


\begin{proof}
Fix $i \ge 2$.  It suffices to prove that whenever $|F_j \cap F_i| < |F_i| - 1$ for $j < i$, there exists $k < i$ such that $F_k \cap F_i \supset F_j \cap F_i$.  Let $a = \min(F_i \setminus F_j)$ and $b = \max(F_j \setminus F_i)$.  Then $S_i \setminus \{a\}$ is closed under addition since $S_j$ is closed under addition and $F_j$ and $F_i$ have identical elements less than $a$.  Additionally, $S_i \cup \{b\} \setminus \{a\}$ is closed under addition, since its elements greater than $b$ are identical to those of $S_j$.  In particular, $S_k = S_i \cup \{b\} \setminus \{a\}$ is an irreducible numerical semigroup with Frobenius number $n$, and $k < i$ since $\sum F_k > \sum F_i$.  This completes the proof.  
\end{proof}

\begin{example}\label{e:shellingexample}
The faces in the simplicial complex $\Delta_7$ are depicted in Figure~\ref{f:shellingexample}.  The shelling order for $\Delta_7$ produced by Proposition~\ref{p:shellable} is $(456, 356, 246)$, and the unique minimal face corresponding to each facet is double-circled in the figure.  Since $h_{n,i}$ counts the number of such minimal faces with exactly $i$ elements, we have $h_7(x) = 1 + 2x$.  
\end{example}

\begin{figure}
\begin{center}
\includegraphics[width=1.0in]{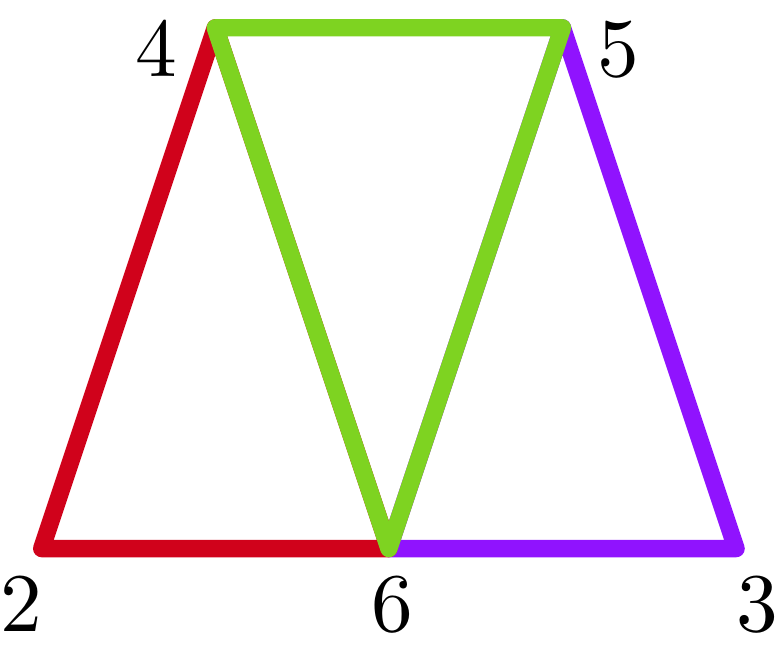}
\qquad
\includegraphics[width=2.5in]{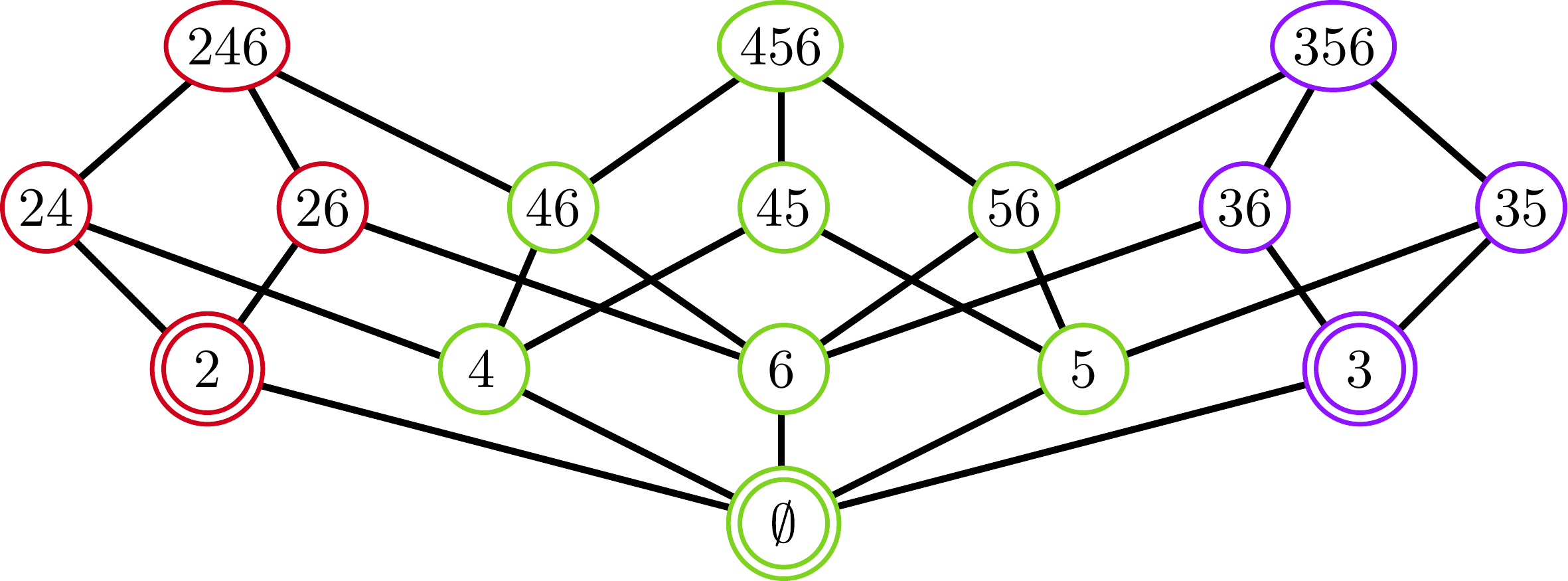}
\end{center}
\caption{The simplicial complex $\Delta_7$ (left) and its face poset (right); see Example~\ref{e:shellingexample}.}
\label{f:shellingexample}
\end{figure}

\begin{thm}\label{t:hvector}
Each $h$-vector entry $h_{n,i}$ is nonnegative and equals the number of irreducible numerical semigroups with Frobenius number $n$ with exactly $i$ minimal generators less than $n/2$.  
\end{thm}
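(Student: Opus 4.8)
The plan is to combine the shelling produced in Proposition~\ref{p:shellable} with the standard combinatorial description of the $h$-vector of a shellable complex. Recall that for any shelling $F_1, \ldots, F_r$ of a pure complex, each facet $F_k$ carries a \emph{restriction face}
\[
\mathcal R(F_k) = \{v \in F_k : F_k \setminus \{v\} \subseteq F_j \text{ for some } j < k\},
\]
and by \cite[Chapter~2]{stanleybook} the entry $h_{n,i}$ equals $\#\{k : |\mathcal R(F_k)| = i\}$. As this is patently a nonnegative integer, nonnegativity is immediate, and the theorem reduces to the single claim that, under the shelling of Proposition~\ref{p:shellable} (facets ordered by decreasing vertex sum), $\mathcal R(F_k)$ is exactly the set of minimal generators of $S_k$ smaller than $n/2$. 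Establishing this identification is the main task.

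First I would unwind what membership in $\mathcal R(F_k)$ means combinatorially. Because $\Delta_n$ is pure (Lemma~\ref{l:symmetric}), all facets have the same cardinality, so $F_k \setminus \{v\} \subseteq F_j$ forces $F_j = (F_k \setminus \{v\}) \cup \{w\}$ for a unique $w \notin F_k$; moreover $j < k$ means $\sum F_j > \sum F_k$, i.e.\ $w > v$. Thus $v \in \mathcal R(F_k)$ iff $S_k$ can be altered by deleting $v$ and inserting some larger $w$ to produce another irreducible semigroup of Frobenius number $n$. The key structural input is the symmetry recorded in Lemma~\ref{l:symmetric}: since $S_k$ and the swapped semigroup $S_j$ agree off $\{v,w\}$ and each satisfies $s \in S \iff n-s \notin S$ for $s \neq n/2$, comparing the two symmetries pins down $w = n-v$ (and, since $n/2 \notin S$ for any irreducible $S$ when $n$ is even, the swap never involves $n/2$). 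In particular $w > v$ becomes $v < n/2$, which is half of the identification.

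The heart of the argument is then the reversible swap $v \leftrightarrow n-v$. For the forward direction I would show that if $v$ is a minimal generator of $S_k$ with $v < n/2$, then $S' := (S_k \setminus \{v\}) \cup \{n-v\}$ is again an irreducible numerical semigroup with Frobenius number $n$: minimality of $v$ guarantees that $S_k \setminus \{v\}$ is closed under addition, and a short case analysis using the symmetry of $S_k$ (if some $a \in S_k$ had $a+(n-v) \le n$ a gap, then $v-a \in S_k$ would exhibit $v = a+(v-a)$ as a nontrivial factorization) shows that adjoining $n-v$ preserves closure; symmetry of $S'$, and hence irreducibility via Lemma~\ref{l:symmetric}, is then a direct check. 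Since $\sum F' = \sum F_k + (n-2v) > \sum F_k$, the facet $F' = (F_k \setminus \{v\}) \cup \{n-v\}$ precedes $F_k$, so $F_k \setminus \{v\} \subseteq F'$ and $v \in \mathcal R(F_k)$. Conversely, if $v \in \mathcal R(F_k)$, then $w = n-v$ and $S_j = (S_k \setminus \{v\}) \cup \{n-v\}$ is a semigroup; were $v = a+b$ with $a,b \in S_k$ nonzero, then $a,b < v < n-v$ would lie in $F_k \setminus \{v\} \subseteq F_j$, forcing $v = a+b \in S_j$ against $v \notin S_j$, so $v$ must be a minimal generator.

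I expect the main obstacle to be the closure verification for $S'$ in the forward direction: this is the one place where minimality of the generator and the symmetry of $S_k$ must be used together, with the bounded range $a+(n-v) \le n$ handled through the symmetry bijection of Lemma~\ref{l:symmetric}. The remaining bookkeeping — purity yielding the single-element swap, and the explicit exclusion of $n/2$ to keep both parities of $n$ uniform — is routine but worth stating so that the identification $\mathcal R(F_k) = \{\text{minimal generators of } S_k \text{ below } n/2\}$, and hence the counting conclusion, is airtight.
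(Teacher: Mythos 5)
Your proposal is correct. Its skeleton coincides with the paper's: both compute $h_{n,i}$ by counting restriction faces of the shelling from Proposition~\ref{p:shellable}, both identify the restriction face of $F_k$ with the set of minimal generators of $S_k$ below $n/2$, and both use the swap $v \leftrightarrow n-v$ as the engine that produces an earlier facet. Where you genuinely diverge is in the inclusion (restriction face) $\subseteq$ (generators below $n/2$). The paper, working with the ``unique minimal new face'' formulation, gets the corresponding inclusion $R_i \subseteq G_i$ globally: by Remark~\ref{r:irreduciblesemigroupgens} the generator set $G_i$ lies in no facet other than $F_i$, so $G_i$ is a new face and must contain the minimal new face $R_i$; the swap is then used only for the reverse inclusion, phrased as a contradiction. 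You instead use the vertex-deletion characterization of the restriction face and argue locally: purity forces any earlier facet containing $F_k \setminus \{v\}$ to have the form $(F_k \setminus \{v\}) \cup \{w\}$, the symmetry of both irreducible semigroups pins down $w = n-v$ (hence $v < n/2$), and closure of $S_j$ together with $F_k \setminus \{v\} \subseteq F_j$ rules out any nontrivial factorization of $v$, so $v$ is a minimal generator. Your route avoids Remark~\ref{r:irreduciblesemigroupgens} entirely and is self-contained at the level of adjacent facets; it also spells out the closure verification for $(S_k \setminus \{v\}) \cup \{n-v\}$ (the case $a + (n-v) < n$ handled via symmetry plus minimality of $v$), a step the paper's proof asserts in a single clause with no detail. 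What the paper's choice buys in exchange is brevity: the uniqueness-of-facet observation settles one inclusion in two lines, at the cost of leaning on the earlier remark.
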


\begin{proof}
Fix notation as in Proposition~\ref{p:shellable}.  
Any shellable simplicial complex has non-negative $h$-vector entries \cite[Corollary~3.2]{stanleybook}, so the non-negativity of each $h_{n,i}$ follows from Proposition~\ref{p:shellable}.  Moreover, under any shelling order of $\Delta_n$ (in particular, under any ordering $F_1, \ldots, F_r$ satisfying Proposition~\ref{p:shellable}), $F_j \setminus (F_1 \cup \cdots \cup F_{j-1})$ has a unique minimal face $R_j$ for each $j$, and $h_{n,i} = \#\{R_j : \#R_j = i\}$ counts the number of such minimal faces with $i$ vertices.  As such, it suffices to show each $R_i$ equals the set $G_i$ of minimal generators of $S_i$ less than $n/2$.  

Now, clearly $h_{n,0} = 1$, so assume $G_i$ is nonempty.  Any facet $F$ containing $G_i$ agrees with $F_i$ for all elements less than $n/2$, and Remark~\ref{r:irreduciblesemigroupgens} implies $F_i = F$.  As such, $F_i$ is the only facet containing the face $G_i$, and $R_i \subset G_i$.  Conversely, suppose $a \in G_i \setminus R_i$, and let $S' = S_i \setminus \{a\} \cup \{n - a\}$.  Since $a$ is a minimal generator of $S$, the set $S'$ is closed under addition and has the same number of elements less than $n$ as $S_i$, meaning $S'$ is an irreducible numerical semigroup.  Since $S_i$ contains $R_i$ and appears before $S_i$ in the shelling order, we have arrived at a contradiction.  
\end{proof}

\begin{cor}\label{c:hvector}
There is a bijection between the irreducible numerical semigroups with Frobenius number $n$ and the numerical semigroups not containing $n$ whose generators are all less than $n/2$.  In~particular, $h_{n,i}$ equals the number of embedding dimension $i$ semigroups not containing $n$ whose generators are all less than $n/2$.  
\end{cor}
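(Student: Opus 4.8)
The plan is to construct an explicit bijection $\Phi$ from the irreducible numerical semigroups of Frobenius number $n$ to the target family (numerical semigroups not containing $n$ all of whose generators are below $n/2$), and then read off the refined count from Theorem~\ref{t:hvector}. For the forward map, given an irreducible $S$ with $F(S)=n$, let $G_S$ be its set of minimal generators less than $n/2$ and put $\Phi(S)=\langle G_S\rangle$. Since $G_S\subseteq S$ we have $\langle G_S\rangle\subseteq S$, so $n\notin S$ forces $n\notin\langle G_S\rangle$; and as the elements of $G_S$ are already minimal generators of $S$, none is a sum of the others, so $G_S$ is exactly the minimal generating set of $\langle G_S\rangle$. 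Hence $\Phi(S)$ lies in the target family and has embedding dimension $\#G_S$. (When $G_S=\emptyset$ we get $\Phi(S)=\langle 0\rangle$, the unique target semigroup of embedding dimension $0$.)

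For the inverse I would use a symmetrization map $\Psi$: for a target semigroup $T$, set
\[
\Psi(T)=\{0\}\cup\big(T\cap(0,\tfrac{n}{2})\big)\cup\{\,m:\tfrac{n}{2}<m<n,\ n-m\notin T\,\}\cup\{\,m:m>n\,\}.
\]
The crux of the whole argument, and the step I expect to be the main obstacle, is verifying that $\Psi(T)$ is closed under addition. I would do this by a short case analysis on two summands $a,b$ according to their position relative to $n/2$: if both are below $n/2$ then $a+b\in T$, and $n\notin T$ rules out $n-(a+b)\in T$, placing $a+b\in\Psi(T)$; if one is below and one above $n/2$ (with sum less than $n$), a similar use of $n\notin T$ applies; and if both exceed $n/2$ then $a+b>n\in\Psi(T)$. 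One also notes $\tfrac{n}{2}\notin T$ (else $n=\tfrac{n}{2}+\tfrac{n}{2}\in T$), so the boundary value causes no trouble. Granting closure, by construction $n\notin\Psi(T)$ while every integer exceeding $n$ belongs to $\Psi(T)$, so $F(\Psi(T))=n$; and the built-in symmetry ($m\in\Psi(T)\iff n-m\notin\Psi(T)$ for $m\ne n/2$) makes $\Psi(T)$ irreducible via Lemma~\ref{l:symmetric}.

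It then remains to check $\Phi$ and $\Psi$ are mutually inverse and to extract the count. By construction $\Psi(T)\cap(0,n/2)=T\cap(0,n/2)$, and because all generators of $T$ lie below $n/2$, the minimal generators of $\Psi(T)$ below $n/2$ coincide with the (full) minimal generating set of $T$; thus $\Phi(\Psi(T))=T$. Conversely, Remark~\ref{r:irreduciblesemigroupgens} shows an irreducible $S$ is determined by $S\cap(0,n/2)=\langle G_S\rangle\cap(0,n/2)$ (the case $G_S=\emptyset$ being handled by direct inspection), whence $\Psi(\Phi(S))=S$. This gives the bijection, under which embedding dimension corresponds to $\#G_S$; combining with Theorem~\ref{t:hvector}, which equates $h_{n,i}$ with the number of irreducible semigroups of Frobenius number $n$ having exactly $i$ minimal generators below $n/2$, yields that $h_{n,i}$ counts the embedding-dimension-$i$ members of the target family, as claimed.
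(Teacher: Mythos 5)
Your proof is correct and takes essentially the same route as the paper: the forward map sends an irreducible $S$ to the semigroup generated by its minimal generators below $n/2$, and the inverse symmetrizes $T$ by adjoining the integers $m \in (n/2,n)$ with $n-m \notin T$ (the paper's set $B$), with the same closure argument in the mixed case ($s' \in T$, $s$ adjoined). If anything, your write-up is slightly more careful than the paper's, which sets $S = T \cup B$ without explicitly adjoining all integers greater than $n$ (needed so that $F(S)=n$, e.g.\ for $T = \langle 2 \rangle$, $n=5$) and which leaves the verification that the two maps are mutually inverse implicit.
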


\begin{proof}
For any irreducible numerical semigroup $S = \<n_1 < \cdots < n_k\>$ with $F(S) = n$, the semigroup $T = \<n_1, \ldots, n_t\>$ with $n_t < n/2 < n_{t+1}$ is contained in $S$ and thus cannot contain $n$.  Conversely, fix a numerical semigroup $T = \<n_1 < \cdots < n_t\>$ with $n_t < n/2$, let 
$$B = \{s \in (n/2,n) : s \notin T \text{ and } n - s \notin T\},$$
and let $S = T \cup B$.  If $s + s' \notin T$ for $s \in B$ and $s' \in T$, then $(n - s) - s' \notin T$, meaning $s + s' \in B$.  We conclude $S$ is closed under addition, at which point Lemma~\ref{l:symmetric} implies $S$ is irreducible.  

The second claim now follows from the first and Theorem~\ref{t:hvector}.  
\end{proof}

\begin{remark}\label{r:polycomputed}
By Theorem~\ref{t:hvector}, the coefficients $h_{n,i}$ of the polynomial $h_n(x)$ can be computed using~\cite{fundamentalgaps}, which gives an algorithm to compute the set of irreducible numerical semigroups with Frobenius number~$n$.  A~precomputed list up to $n = 90$ can be found at the following webpage (the computations for $n \ge 88$ each take over a day to complete with the authors' personal computers): 
\begin{center}
\url{https://gist.github.com/coneill-math/c2f12c94c7ee12ac7652096329417b7d}
\end{center}
The $h$-vectors of $\Delta_{89}$ and $\Delta_{90}$ (some entries of which are given in Table~\ref{tb:bounds}) demonstrate an interesting phenomenon: not only are the coefficients $h_{n,i}$ not necessarily monotone for fixed $i$, but the fewer divisors $n$ has, the larger $h_{n,i}$ tends to be with respect to the surrounding $n$-values.  This is likely due in part to the complex $\Delta_n$ having more vertices in this case.  The computations also take considerably longer in such cases; indeed, $n = 89$ took longer than for $n = 88$ and $n = 90$ combined.  

We remind the reader that the polynomial $h_n(x)$ does not depend on a given numerical semigroup; rather, there is precisely one polynomial for each $n \in \ZZ_{\ge 1}$, and it encodes information about \emph{all} numerical semigroups with Frobenius number $n$.  
Though a wide assortment of posets whose elements are numerical semigroups have been studied elsewhere in the literature \cite{oversemigroup,frobeniusvariety}, we were unable to locate any that consider $\Delta_n$.  
\end{remark}

We now give some basic properties of the $h$-vector of $\Delta_n$.  

\begin{prop}\label{p:hvectorprops}
Fix $n \ge 1$.  
\begin{enumerate}[(a)]
\item 
$h_{n,0} = 1$.  

\item 
$h_{n,1} = \lfloor (n+1)/2 \rfloor - \tau(n)$, where $\tau(n)$ denotes the number of divisors of $n$.  

\item 
$d_n = \deg h_n(x) = \lfloor (n-1)/2 \rfloor - \lfloor n/3 \rfloor$.  

\end{enumerate}
\end{prop}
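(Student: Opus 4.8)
The plan is to read all three quantities off the combinatorial description of the $h$-vector supplied by Theorem~\ref{t:hvector} and Corollary~\ref{c:hvector}, rather than from the alternating-sum formula. Recall from Corollary~\ref{c:hvector} that $h_{n,i}$ counts the numerical semigroups $T$ of embedding dimension $i$ with $n \notin T$ all of whose minimal generators are smaller than $n/2$. Part~(a) is then immediate: the only numerical semigroup of embedding dimension $0$ is $T = \{0\}$, which has no generators and does not contain $n$ for $n \ge 1$, so $h_{n,0}=1$ (this also matches $h_0 = f_{-1} = 1$, valid for any nonempty complex). For part~(b), a semigroup of embedding dimension $1$ is $T = \langle a\rangle$ for a single integer $a$, and the two constraints become $1 \le a < n/2$ together with $n \notin \langle a\rangle$, i.e. $a \nmid n$. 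Thus $h_{n,1}$ is exactly the number of integers in $[1,n/2)$ that fail to divide $n$. There are $\lfloor (n-1)/2\rfloor$ integers in $[1,n/2)$, so I would subtract the number of divisors of $n$ lying below $n/2$. Since any divisor $d\mid n$ with $d\ge n/2$ forces $n/d\le 2$, the divisors of $n$ that are $\ge n/2$ are exactly $n$ itself and, when $n$ is even, also $n/2$; hence the number of divisors below $n/2$ is $\tau(n)-1$ or $\tau(n)-2$ according to the parity of $n$, which is the one point requiring care in this step. Substituting yields the closed form in~(b).

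For part~(c), observe that $d_n = \deg h_n(x)$ is the largest $i$ with $h_{n,i}>0$, so by the interpretation above it equals the maximal number of minimal generators below $n/2$ that a numerical semigroup $T$ with $n\notin T$ can possess. I would first establish the lower bound $d_n \ge \lfloor(n-1)/2\rfloor - \lfloor n/3\rfloor$ by exhibiting an extremal semigroup. Let $G = \{a\in\ZZ : n/3 < a < n/2\}$ and put $T = \langle G\rangle$; a direct count gives $|G| = \lfloor(n-1)/2\rfloor - \lfloor n/3\rfloor$. Every nonzero element of $T$ is a sum of generators, each exceeding $n/3$, so every nonzero element of $T$ exceeds $n/3$ and every sum of two nonzero elements exceeds $2n/3 > n/2$; since each element of $G$ is $<n/2$, no element of $G$ is such a sum, and therefore $G$ is precisely the minimal generating set, giving $e(T)=|G|$. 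Moreover a single generator is $<n/2<n$, a sum of two generators lies in $(2n/3,n)$, and a sum of at least three generators exceeds $n$, so $n\notin T$. This $T$ realizes the claimed value.

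The main obstacle is the matching upper bound: every numerical semigroup $T$ with $n\notin T$ whose minimal generators all lie below $n/2$ has at most $\lfloor(n-1)/2\rfloor - \lfloor n/3\rfloor$ minimal generators. Generators in $(n/3,n/2)$ are trivially bounded in number by $\#\{a:n/3<a<n/2\}$, but a priori one could also have generators in $(0,n/3]$, and these must be controlled. I would handle them by an exchange argument: beginning with a minimal generating set $G$ of maximal size, I would show that any generator $g\le n/3$ can be deleted and compensated by promoting a suitable element of $(n/3,n/2)$ into the generating set without decreasing $|G|$, so that after finitely many steps all generators lie in $(n/3,n/2)$ and the interval bound applies. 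The delicate point — the step I expect to be genuinely hard — is verifying that each exchange simultaneously preserves minimality of $G$ and the condition $n\notin\langle G\rangle$, i.e. ruling out that removing a small generator forces $n$ into the semigroup while no valid replacement exists. A cleaner alternative I would also attempt is to build an explicit injection from the minimal generators into the integers of $(n/2,2n/3)$, which is equinumerous with $(n/3,n/2)$ via $x\mapsto n-x$: a generator $g>n/3$ maps to the gap $n-g$, while each generator $g\le n/3$ is routed to a suitable gap of $T$ in the same range. Making this routing well defined and collision-free for the small generators is precisely the subtle part.
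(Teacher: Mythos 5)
Part (a) and the lower-bound half of part (c) are correct and coincide with the paper's own argument: both proceed from Corollary~\ref{c:hvector}, and both use the witness $A=(n/3,n/2)\cap\ZZ$. Part (b) is where your extra care backfires: your reduction to counting non-divisors of $n$ in $[1,n/2)$ is the same as the paper's, and your endpoint analysis is right, but your final assertion that substituting ``yields the closed form in (b)'' is false for even $n$. For odd $n$ you get $\lfloor(n-1)/2\rfloor-(\tau(n)-1)=\lfloor(n+1)/2\rfloor-\tau(n)$, as stated; for even $n$ you get $\lfloor(n-1)/2\rfloor-(\tau(n)-2)=\lfloor n/2\rfloor+1-\tau(n)$, which exceeds the stated value $n/2-\tau(n)$ by $1$. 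So your computation contradicts the printed formula rather than proving it --- and in fact the formula is what is wrong: for $n=8$ the only admissible one-generator semigroup is $\langle 3\rangle$, so $h_{8,1}=1$ while the formula gives $\lfloor 9/2\rfloor-\tau(8)=0$; likewise the paper's own Table~\ref{tb:bounds} reports $h_{90,1}=34$ while the formula gives $45-12=33$. (The paper's one-sentence proof of (b) never performs the endpoint count, which is presumably how the slip survived.) The correct closed form is $h_{n,1}=\lfloor n/2\rfloor+1-\tau(n)$; you should have flagged the discrepancy instead of asserting agreement.

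For part (c) the upper bound is a genuine gap in your proposal: you never prove that a minimal generating set $A\subset(0,n/2)$ with $n\notin\langle A\rangle$ has at most $\lfloor(n-1)/2\rfloor-\lfloor n/3\rfloor$ elements; you sketch an exchange argument and an injection and yourself identify the decisive step of each as unverified. The obstacles are real: deleting a generator $g\le n/3$ and promoting a new element can force $n$ into the semigroup, and $a\mapsto n-a$ sends a generator $a\le n/3$ outside $(n/2,2n/3)$, with no canonical gap to reroute it to. The paper does not argue this way at all: it invokes the already-established Theorem~\ref{t:bounds}, whose upper bound $h_{n,i}\le\binom{\lceil n/2\rceil-2i}{i}$ vanishes once $3i>\lceil n/2\rceil$. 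The idea you are missing lives inside the proof of that theorem and is a congruence argument, not an exchange: put $m=\min(A)$ and $i=|A|$; minimality forces the elements of $A$ into distinct residue classes mod $m$, and $n\notin\langle A\rangle$ forbids $a\equiv n\pmod m$ and $a+a'\equiv n\pmod m$ for $a,a'\in A$, since either congruence writes $n$ as a nonnegative integer combination of $m$ and elements of $A$ (using $a<n$ and $a+a'<n$); hence the residues of the elements $a$ and of the values $n-a$ give $2i$ distinct classes, so $m\ge 2i$ and every generator lies in $[2i,n/2)$. That is precisely what controls the small generators your two sketches stall on. One caveat if you adopt this route: when $n\equiv 0\pmod 6$ the binomial bound alone gives only $d_n\le n/6$, one more than claimed; this is closed by noting that $\min(A)=n/3$ is impossible (as $3\cdot(n/3)=n$), so either every generator exceeds $n/3$ and the interval count applies, or $\min(A)\le n/3-1$ and then $i\le(n/3-1)/2<n/6$. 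The paper elides this point, so a fully rigorous write-up needs it in either approach.
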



\begin{proof}
We proceed using Corollary~\ref{c:hvector} and characterizing the possible sets $A \subset (0,n/2)$ of integers minimally generating a numerical semigroup $S = \<A\>$ with $n \notin S$.  
Since $A = \emptyset$ generates the semigroup $S = \{0\}$, we have $h_{n,0} = 1$.  Additionally, any non-divisor of $n$ less than $n/2$ generates a semigroup not containing $n$, which proves part~(b).  

Now, Theorem~\ref{t:bounds} implies $d_n \le \lfloor (n-1)/2 \rfloor - \lfloor n/3 \rfloor$.  Moreover, the set $A = (n/3,n/2) \cap \ZZ$ minimally generates a semigroup not containing $n$ since the sum of any two elements is strictly less than~$n$ while the sum of any three is strictly larger than $n$.  Since $|A| = d_n$, this proves~(c).  
\end{proof}

We conclude this section with the following bounds on the $h$-vector entries of $\Delta_n$, which play a crutial role in establishing the final threshold function in Section~\ref{sec:mingens} and estimating several expected values in Section~\ref{sec:approx}.  

\begin{thm}\label{t:bounds}
For any $n \ge 1$, $i \ge 1$, and $N \ge 2$, we have
$$\sum_{j = 2}^N \binom{\lfloor (n-1)/j \rfloor - \lfloor n/(j+1) \rfloor}{i} \le h_{n,i} \le \binom{\lceil n/2 \rceil - 2i}{i}.$$
\end{thm}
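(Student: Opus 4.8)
The plan is to prove both inequalities using the combinatorial description of $h_{n,i}$ furnished by Corollary~\ref{c:hvector}: namely, $h_{n,i}$ equals the number of sets $A = \{a_1 < \cdots < a_i\} \subset \ZZ \cap (0, n/2)$ that minimally generate a numerical semigroup $\langle A\rangle$ with $n \notin \langle A\rangle$. Both bounds then become assertions about how many such $i$-element ``valid'' sets can exist, where each side of the inequality is itself a count of $i$-subsets of an explicit ground set.

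\textbf{Lower bound.} Here I would produce many valid sets explicitly. For each $j$ with $2 \le j \le N$, consider the interval $I_j = \{a \in \ZZ : n/(j+1) < a < n/j\}$, which contains exactly $\lfloor (n-1)/j\rfloor - \lfloor n/(j+1)\rfloor$ integers, all $< n/2$ since $j \ge 2$. I claim every $i$-subset $A \subseteq I_j$ is valid. Indeed, a sum of $t \ge 1$ elements of $A$ (with multiplicity) lies strictly between $tn/(j+1)$ and $tn/j$; for such a sum to equal $n$ one would need $j < t < j+1$, which is impossible, so $n \notin \langle A\rangle$. Moreover any sum of $t \ge 2$ elements exceeds $2n/(j+1) \ge n/j$ and hence exceeds every element of $A$, so no generator is a sum of the others and $A$ minimally generates a semigroup of embedding dimension $i$. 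Since $I_2, \ldots, I_N$ are pairwise disjoint and a numerical semigroup determines its minimal generating set, distinct subsets give distinct semigroups, and summing $\binom{|I_j|}{i}$ over $j$ yields the stated lower bound.

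\textbf{Upper bound (the crux).} Here I would bound the size of every valid $A$ by proving that its least element satisfies $a_1 \ge 2i$; granting this, $A$ is an $i$-subset of $\{2i, 2i+1, \ldots, \lceil n/2\rceil - 1\}$, a set of size $\lceil n/2\rceil - 2i$, and the bound $\binom{\lceil n/2\rceil - 2i}{i}$ follows immediately. To prove $a_1 \ge 2i$ I work modulo $a_1$. Minimality forces the residues $R = \{a_1, \ldots, a_i\} \bmod a_1$ to be $i$ distinct classes: two generators sharing a residue, or one $\equiv 0$, would make the larger redundant via $a_\ell = a_k + (\text{multiple of }a_1)$. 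Writing $\rho^* = n \bmod a_1$, which is nonzero since $a_1 \mid n$ would put $n \in \langle a_1\rangle$, I would show the translate $\rho^* - R$ is disjoint from $R$: if $\rho^* - \rho_k \equiv \rho_\ell$ then $a_k + a_\ell \equiv n \pmod{a_1}$, and since $a_k + a_\ell < n$ is an element of $\langle A\rangle$ congruent to $n$, adding a suitable multiple of $a_1$ would express $n$, a contradiction. As $x \mapsto \rho^* - x$ is a bijection of $\ZZ/a_1$, the sets $R$ and $\rho^* - R$ are disjoint $i$-element subsets of $\ZZ/a_1$, forcing $a_1 \ge 2i$. The main obstacle is precisely this disjointness argument: the point is to exploit the non-representability of $n$ through residues modulo the multiplicity $a_1$, so that the forbidden pair-sums double the number of occupied residue classes and upgrade the elementary bound $a_1 \ge i$ (coming from minimality alone) to the needed $a_1 \ge 2i$.
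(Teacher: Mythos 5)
Your proposal is correct and follows essentially the same route as the paper: the lower bound via $i$-subsets of the intervals $(n/(j+1), n/j)$, and the upper bound by showing $\min(A) \ge 2i$ through the $2i$ distinct residue classes modulo $\min(A)$ forced by minimality together with the non-representability of $n$. Your write-up merely makes explicit some steps the paper leaves implicit (the bijection $x \mapsto \rho^* - x$ and the disjointness of $R$ and $\rho^* - R$), but the ideas coincide.
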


\begin{proof}
By Corollary~\ref{c:hvector}, $h_{n,i}$ counts sets $A$ minimally generating a semigroup not containing $n$.  We claim $m = \min(A) \ge 2i$.  Indeed, since $A$ forms a minimal generating set, each element must be distinct modulo $m$.  Additionally, since $n \notin \langle A \rangle$, any element $a \in A$ cannot satisfy $a \equiv n \bmod m$, and since $\max(A) \le \lfloor n/2 \rfloor$, any two elements $a, b \in A$ cannot satisfy $a + b \equiv n \bmod m$.  
The upper bound immediately follows.  

For the lower bound, we claim that for any $j \ge 2$, each set $A$ of $i$ distinct integers chosen from the open interval $(n/(j+1), n/j)$ minimally generates a numerical semigroup $S$ with $n \notin S$.  Indeed,~the sum of any $j$ elements of $A$ is strictly less than $n$, while the sum of any $j+1$ is strictly larger than~$n$.  Additionally, since $j \ge 2$, the sum of any two elements of $A$ exceeds $n/j$, ensuring $A$ minimally generates $S$.  This completes the proof.  
\end{proof}

\begin{remark}\label{r:bounds}
Proposition~\ref{p:hvectorprops} implies the lower bound in Theorem~\ref{t:bounds} is tight for $i = 1$.  The~given bounds are tighter than those for more general Cohen-Macaulay simplicial complexes \cite{stanleybook} and are sufficient to prove the results in the coming sections, but still leave room for improvement.  Table~\ref{tb:bounds} compares values from Theorem~\ref{t:bounds} with those computed in Remark~\ref{r:polycomputed}.  
\end{remark}

\begin{table}[tbp]
\begin{center}
\begin{tabular}{|l|l|l|l||l|l|l|l||l|l|l|}
\hline
\multicolumn{4}{|l||}{$n = 89$} & 
\multicolumn{4}{l||}{$n = 90$} & 
\multicolumn{3}{l|}{$n = 500$} \\
\hline
$i$ & Lower & Actual & Upper & $i$ & Lower & Actual & Upper & $i$ & Lower & Upper \\
\hline
1  & 43   & 43    & 43      & 1  & 34   & 34    & 43      & 10 & $2.4 \cdot 10^{12}$ & $9.3 \cdot 10^{16}$ \\
3  & 501  & 3873  & 9139    & 3  & 403  & 2442  & 9139    & 20 & $8.1 \cdot 10^{18}$ & $4.4 \cdot 10^{27}$ \\
5  & 3025 & 27570 & 324632  & 5  & 2023 & 16065 & 324632  & 30 & $3.4 \cdot 10^{22}$ & $7.7 \cdot 10^{34}$ \\
7  & 6436 & 39358 & 2629575 & 7  & 3433 & 21213 & 2629575 & 40 & $8.0 \cdot 10^{23}$ & $1.3 \cdot 10^{39}$ \\
9  & 5005 & 18186 & 4686825 & 9  & 2002 & 8343  & 4686825 & 50 & $1.4 \cdot 10^{23}$ & $2.0 \cdot 10^{40}$ \\
11 & 1365 & 3044  & 1352078 & 11 & 364  & 1055  & 1352078 & 60 & $1.8 \cdot 10^{20}$ & $6.4 \cdot 10^{37}$ \\
13 & 105  & 153   & 27132   & 13 & 14   & 31    & 27132   & 70 & $5.2 \cdot 10^{14}$ & $1.6 \cdot 10^{30}$ \\
\hline
\end{tabular}
\end{center}
\caption[Comparison of bounds in Theorem~\ref{t:bounds}]{Comparison of the bounds in Theorem~\ref{t:bounds} with values computed in Remark~\ref{r:polycomputed}.}
\label{tb:bounds}
\end{table}










\section{Expected number of minimal generators}
\label{sec:mingens}

The main result of this section is Corollary~\ref{c:embdim}, which states that if $p \to 0$ as $M \to \infty$, then the expected number of generators, expected number of gaps, and expected Frobenius number are all unbounded.  Our proof uses a surprising connection between the probability $a_n(p)$ that a non-negative integer $n$ lies in the chosen semigroup (Definition~\ref{d:anp}) and the $h$-vector of the simplicial complex $\Delta_n$ introduced in Section~\ref{sec:complex}; see Remark~\ref{r:hilbertseries}.  

\begin{conv}\label{conv:cofinitesubtlety}
In the remainder of the paper, we adhere to the convention that if $S$ is a numerical semigroup that is not cofinite, then $g(S) = F(S) = 0$.  We do this so as not to affect $\expect{g(\nsg)}$ and $\expect{F(\nsg)}$; indeed, the assumption $1/M \ll p(M)$ is made everywhere either of those two quantities appears, so Theorem~\ref{t:cofinite} ensures that $S$ is cofinite a.a.s.  
\end{conv}

\begin{defn}\label{d:anp}
Let $\nsg \sim S(M,p)$.  For each integer $n \in [M]$, denote by $a_n(p)$ the probability that $n$ \textit{cannot} be written as a sum of elements in $\nsg\cap[n-1]$, that is,
$$a_n(p) := \prob{n \notin \langle \nsg \cap [n-1] \rangle}.$$
\end{defn}

\begin{prop}\label{p:anp}
For each $n \ge 1$, $a_n(p) = (1 - p)^{\lfloor n/2 \rfloor} h_n(p)$.
\end{prop}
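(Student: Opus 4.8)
The plan is to reinterpret $a_n(p)$ as the probability that a \emph{random vertex subset} of $\Delta_n$ happens to be a face, and then to convert the resulting $f$-vector sum into the $h$-polynomial $h_n$.

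The combinatorial heart of the argument is the equivalence
\[
G \in \Delta_n \iff n \notin \langle G\rangle \qquad\text{for every } G \subseteq [n-1].
\]
The forward implication is immediate: if $G \in \Delta_n$ then $G$ lies in some facet $F_i = S_i \cap [n-1]$ with $S_i$ irreducible of Frobenius number $n$ (Definition~\ref{d:irreducible}, Definition~\ref{d:complexofirreducibles}), so $\langle G\rangle \subseteq S_i$ and $n \notin \langle G\rangle$. For the converse, if $n \notin \langle G\rangle$ then $\langle G\rangle$ is contained in a numerical semigroup $S$ maximal among those avoiding $n$; such an $S$ has $F(S) = n$ (were $F(S) = m > n$, then $S \cup \{m\}$ would again be a numerical semigroup avoiding $n$, contradicting maximality) and is therefore irreducible, whence $G \subseteq S \cap [n-1]$ is a face of $\Delta_n$.

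Next I would translate the probability. Since $0 \in \gens$ always and contributes nothing to a generating set, $\langle \nsg \cap [n-1]\rangle = \langle \gens \cap [n-1]\rangle$, and $\gens \cap [n-1]$ is exactly a random subset of the $n-1$ vertices $\{1,\dots,n-1\}$ in which each vertex is kept independently with probability $p$. Combining this with the equivalence above and grouping faces by their number of vertices (the empty face $G=\emptyset$, generating $\langle\emptyset\rangle=\{0\}$, accounts for $f_{-1}=1$) gives
\[
a_n(p) = \prob{n \notin \langle \gens \cap [n-1]\rangle} = \prob{\gens \cap [n-1] \in \Delta_n} = \sum_{i \ge 0} f_{i-1}\, p^{\,i}\,(1-p)^{(n-1)-i},
\]
where $f_{i-1}=f_{i-1}(\Delta_n)$ as in Definition~\ref{d:simplicialcomplex}. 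Finally I extract the factor $(1-p)^{\lfloor n/2\rfloor}$ and recognize $h_n$. By Lemma~\ref{l:symmetric} the complex $\Delta_n$ is pure, and the symmetry $s\in S_i \iff n-s\notin S_i$ (for $0<s<n$, $s\neq n/2$) shows each facet $F_i = S_i\cap[n-1]$ has exactly $D := \lceil n/2\rceil - 1$ vertices. Since $\lfloor n/2\rfloor + \lceil n/2\rceil = n$, one has $(n-1)-i = \lfloor n/2\rfloor + (D-i)$, so
\[
a_n(p) = (1-p)^{\lfloor n/2\rfloor}\sum_{i \ge 0} f_{i-1}\, p^{\,i}\,(1-p)^{D-i} = (1-p)^{\lfloor n/2\rfloor}\, h_n(p),
\]
the last step being the standard generating-function form of the $f$-to-$h$ conversion of Definition~\ref{d:simplicialcomplex}, namely $\sum_{i} f_{i-1}\,x^{i}(1-x)^{D-i} = \sum_i h_{n,i}\,x^i = h_n(x)$ for a pure complex whose facets have $D$ vertices.

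The main obstacle is the equivalence $G\in\Delta_n \iff n\notin\langle G\rangle$: specifically, verifying cleanly that a numerical semigroup maximal among those avoiding $n$ has Frobenius number exactly $n$ (hence is irreducible), including the case where $\langle G\rangle$ is not cofinite, so that the maximal extension still exists and lands among the $S_i$. A secondary point demanding care is the exponent bookkeeping: one must confirm that the common facet cardinality is $\lceil n/2\rceil - 1$ (an off-by-one here, e.g.\ from miscounting $0$ or $n/2$, would leave the wrong power of $(1-p)$), so that the residual factor is precisely $(1-p)^{\lfloor n/2\rfloor}$.
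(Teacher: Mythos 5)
Your proof is correct and takes essentially the same route as the paper: both identify the faces of $\Delta_n$ with the subsets $A \subseteq [n-1]$ satisfying $n \notin \langle A \rangle$, write $a_n(p) = \sum_{A \in \Delta_n} p^{|A|}(1-p)^{n-1-|A|}$ as a sum over faces of the random set $\gens \cap [n-1]$, and finish with the standard $f$-to-$h$ conversion for a pure complex. The only difference is thoroughness: you prove the face characterization (via maximal semigroups avoiding $n$) and the facet-cardinality count $\lceil n/2 \rceil - 1$ that the paper merely asserts, and your bookkeeping is the correct one --- it matches the example $\Delta_7$ and quietly corrects the off-by-one in the dimension stated in Lemma~\ref{l:symmetric}.
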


\begin{proof}
The faces of $\Delta_n$ are precisely the sets $A \subset [n-1]$ satisfying $n \notin \langle A \rangle$.  As such, 
$$a_n(p) = \sum_{A \in \Delta_n} p^{|A|}(1 - p)^{n-1-|A|} = (1 - p)^{\lfloor n/2 \rfloor}\sum_{i = 0}^{d_n} f_ip^{i}(1 - p)^{d_n-i} = (1 - p)^{\lfloor n/2 \rfloor} h_n(p)$$
follows upon unraveling Definitions~\ref{d:simplicialcomplex} and~\ref{d:complexofirreducibles}.  
\end{proof}

\begin{remark}\label{r:hilbertseries}
By Proposition~\ref{p:anp}, we can write
$$a_n(p) = (1-p)^n \mathcal H(\kk[x]/I_{\Delta_n};p)$$
in terms of the Hilbert series of the \emph{Stanley-Reisner ring} $\kk[x]/I_{\Delta_n}$ (see \cite[Chapter~2, Definition~1.1]{stanleybook}).  We encourage the interested reader to consult~\cite{stanleybook} for background on Hilbert functions.  
\end{remark}

\begin{thm}\label{thm:embdim}
Let $\nsg \sim S(M,p)$, where $p = p(M)$ is a monotone decreasing function of $M$.  If~$1/M \ll p \ll 1$, then $\lim_{M \to \infty} \expect{e(\nsg)} = \infty$.
\end{thm}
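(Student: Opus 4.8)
The plan is to evaluate $\expect{e(\nsg)}$ exactly as a sum of single-generator probabilities and then show this sum outgrows $1/p$. Writing $e(\nsg)=\sum_{n=1}^M 1_{E_n}$, where $E_n$ is the event that $n$ is a minimal generator of $\nsg$, observe that $n$ is a minimal generator exactly when $n\in\gens$ and $n\notin\langle\nsg\cap[n-1]\rangle$. The first event depends only on the coin flip at $n$ and the second only on the coin flips strictly below $n$, so they are independent and $\prob{E_n}=p\,a_n(p)$ in the notation of Definition~\ref{d:anp}. Linearity of expectation together with Proposition~\ref{p:anp} then yields the exact formula
\[
\expect{e(\nsg)}=p\sum_{n=1}^M a_n(p)=p\sum_{n=1}^M (1-p)^{\lfloor n/2\rfloor}h_n(p),
\]
so the theorem reduces to proving $\sum_{n=1}^M a_n(p)\gg 1/p$ whenever $1/M\ll p\ll 1$ (the hypotheses supply $p\to 0$ and $pM\to\infty$).

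\textbf{The main line of attack.} To manufacture such growth I would truncate the sum to a window $n\le N(p)$ chosen with $1/p\ll N(p)\ll M$ (such a window exists precisely because $pM\to\infty$), and argue that $a_n(p)$ remains bounded below by an absolute constant $c>0$ throughout the window. Granting this, the exact formula gives $\expect{e(\nsg)}\ge c\,p\,N(p)\to\infty$. Via Proposition~\ref{p:anp}, keeping $a_n(p)\ge c$ out to $n\asymp N(p)$ is equivalent to a lower bound of the form $h_n(p)\gtrsim (1-p)^{-\lfloor n/2\rfloor}$ on the window, so the mechanism is to insert the lower bound of Theorem~\ref{t:bounds} into $h_n(p)=\sum_i h_{n,i}p^i$, retaining as many of the intervals $(n/(j+1),n/j)$ and as many middle coefficients $h_{n,i}$ as possible in order to make $h_n(p)$ nearly cancel the decaying factor $(1-p)^{\lfloor n/2\rfloor}$.

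\textbf{The main obstacle.} The difficulty is exactly this lower bound on $a_n(p)$ far out in the window. The cheap sufficient conditions for non-representability are too weak: demanding that the smallest chosen element exceed $n/2$ gives only $a_n(p)\ge (1-p)^{\lfloor n/2\rfloor}$, and retaining a single interval in Theorem~\ref{t:bounds} gives only $a_n(p)\gtrsim (1-p)^{\lfloor n/2\rfloor}(1+p)^{\lfloor(n-1)/2\rfloor-\lfloor n/3\rfloor}$. Both of these decay like $e^{-\Theta(np)}$, hence keep $a_n(p)$ bounded below only up to $n\asymp 1/p$ and force $\sum_n a_n(p)=\Theta(1/p)$, which yields a bounded (not divergent) expectation. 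What must genuinely be captured is that $n$ fails to be representable with probability bounded away from $0$ even when $n$ is a large multiple of $1/p$, out to the representability scale $n\asymp\tfrac1p\operatorname{polylog}(1/p)$. In this range a first-moment count of representations is useless, since the expected number of representations of $n$ already diverges there, so the argument cannot rely on a union bound and must instead exploit the full spread of the interior $h$-vector coefficients — equivalently, count generating sets $A\subset(0,n/2)$ avoiding $n$ that occupy several of the scales $(n/(j+1),n/j)$ simultaneously rather than a single one. Producing a lower bound on $h_n(p)$ strong enough to survive multiplication by $(1-p)^{\lfloor n/2\rfloor}$ across a superlinear window is the crux on which the divergence turns, and it is the step I expect to require the real work.
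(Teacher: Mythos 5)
Your opening reduction is correct and coincides with the paper's first step: writing $e(\nsg)=\sum_{n=1}^M 1_{E_n}$, noting that the event $n\in\gens$ is independent of the coin flips below $n$, and invoking Proposition~\ref{p:anp} gives
\[
\expect{e(\nsg)}=p\sum_{n=1}^M a_n(p)=\sum_{n=1}^M p(1-p)^{\lfloor n/2\rfloor}h_n(p),
\]
so the theorem is equivalent to showing this sum diverges. From that point on, however, your proposal is a plan rather than a proof: the pointwise bound $a_n(p)\ge c$ on a window $1/p\ll N(p)\ll M$ is never established, and you yourself flag it as ``the step I expect to require the real work.'' That is the gap.

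The deeper problem is that your diagnosis of the obstacle is mistaken, and it steers you toward a claim that is both harder than necessary and never used by the paper. You dismiss the scale-by-scale lower bound of Theorem~\ref{t:bounds} on the grounds that each single scale $j$ forces $\sum_n a_n(p)=\Theta(1/p)$ and hence a bounded expectation. True per scale --- but the per-scale constants do not decay in $j$, and that is precisely the mechanism of the paper's proof. Keeping the full sum over $j=2,\ldots,N$ in Theorem~\ref{t:bounds} and summing over $n$ \emph{within} each scale, the exponents $\lfloor n/2\rfloor$ and $\lfloor (n-1)/j\rfloor-\lfloor n/(j+1)\rfloor$ advance by $j(j+1)$ and $2$ over each period of length $2j(j+1)$ in $n$, so each scale contributes a geometric series with ratio $(1-p)^{j(j+1)}(1+p)^{2}$, and
\[
\sigma_j=\sum_{n=1}^M p(1-p)^{\lfloor n/2\rfloor}(1+p)^{\lfloor (n-1)/j\rfloor-\lfloor n/(j+1)\rfloor}\ \longrightarrow\ \frac{2j(j+1)}{j(j+1)-2}\ \ge\ 2,
\]
where $pM\to\infty$ kills the tail $\bigl((1-p)^{j(j+1)}(1+p)^2\bigr)^{M/(2j(j+1))}$ and $p\to 0$ evaluates the limit. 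Hence $\liminf_{M\to\infty}\expect{e(\nsg)}\ge\sum_{j=2}^N\sigma_j\ge 2(N-1)$ for every fixed $N$, and letting $N\to\infty$ after $M\to\infty$ proves divergence. So the expectation blows up by accumulating a contribution of roughly $2$ from each of arbitrarily many scales, not because $a_n(p)$ stays bounded below past $n\asymp 1/p$: no pointwise bound better than $e^{-\Theta(np)}$ is needed, and no generating sets occupying several intervals $(n/(j+1),n/j)$ simultaneously are needed. Whether your window claim ($a_n(p)\ge c$ out to $n\asymp\tfrac1p\,\mathrm{polylog}(1/p)$) is even true is left open by your argument; settling it would require tools (e.g.\ a second-moment or structural analysis of representations) that go well beyond Theorem~\ref{t:bounds}, and the theorem does not require it.
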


\begin{proof}
By Proposition~\ref{p:anp} and the linearity of expectation,
\[
\expect{e(\nsg)}
= \sum_{n=1}^M pa_n(p)
= \sum_{n=1}^M p(1-p)^{\lfloor n/2 \rfloor}h_n(p).
\]
For any fixed integer $N \ge 2$, the lower bound in Theorem~\ref{t:bounds} gives
\begin{align*}
\expect{e(\nsg)}
&\ge \sum_{n=1}^M p(1-p)^{\lfloor n/2 \rfloor} \sum_{i=0}^{d_n} \sum_{j = 2}^N \binom{\lfloor (n-1)/j \rfloor - \lfloor n/(j+1) \rfloor}{i}p^i \\
&= \sum_{j = 2}^N \sum_{n=1}^M p(1-p)^{\lfloor n/2 \rfloor} \sum_{i=0}^{d_n} \binom{\lfloor (n-1)/j \rfloor - \lfloor n/(j+1) \rfloor}{i}p^i.
\end{align*}
We now consider each summand $\sigma_j$ of the outer sum for a fixed value of $j$.  Using the division algorithm to write each $n = k (2j(j+1)) + r$ for $k \ge 0$ and $1 \le r \le 2j(j+1)$, and supposing $M = m (2j(j+1))$ for some $m \in \ZZ_{\ge 1}$, we obtain
\begin{align*}
\sigma_j
&= \sum_{n=1}^M p(1-p)^{\lfloor n/2 \rfloor} \sum_{i=0}^{d_n} \binom{\lfloor (n-1)/j \rfloor - \lfloor n/(j+1) \rfloor}{i}p^i \\
&= \sum_{n=1}^M p(1-p)^{\lfloor n/2 \rfloor}(1+p)^{\lfloor (n-1)/j \rfloor - \lfloor n/(j+1) \rfloor} \\
&= \sum_{k=1}^m p(1-p)^{kj(j+1)+\bigoh{1}}\sum_{r=1}^{2j(j+1)} (1+p)^{2k(j+1)-2kj+\bigoh{1}} \\
&= \left(\sum_{r=1}^{2j(j+1)} (1-p)^{\bigoh{1}}(1+p)^{\bigoh{1}}\right) \sum_{k=1}^m p\left((1-p)^{j(j+1)}(1+p)^2\right)^{k-1} \\
&= \left(\sum_{r=1}^{2j(j+1)} (1-p)^{\bigoh{1}}(1+p)^{\bigoh{1}}\right) p\frac{1 - \left((1-p)^{j(j+1)}(1+p)^2\right)^m}{1 - (1-p)^{j(j+1)}(1+p)^2}.
\end{align*}
Since $p \gg 1/M$, a simple calculus exercise shows $\left((1-p)^{j(j+1)}(1+p)^2\right)^m \to 0$.  If $p \to 0$, we obtain
$$\expect{e(\nsg)} \ge \sum_{j=2}^N \sigma_j \to \sum_{j=2}^N \frac{2j(j+1)}{j(j+1)-2} \ge 2(N-1),$$
which must hold for every $N \ge 2$.  
\end{proof}

\begin{cor}\label{c:embdim}
Resuming notation from Theorem~\ref{thm:embdim}, if $1/M \ll p \ll 1$, then 
$$\lim_{M \to \infty} \expect{e(\nsg)} = \lim_{M \to \infty} \expect{g(\nsg)} = \lim_{M \to \infty} \expect{F(\nsg)} = \infty.$$
\end{cor}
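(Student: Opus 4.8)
The plan is to deduce the two remaining limits from Theorem~\ref{thm:embdim}, which already supplies $\lim_{M\to\infty}\expect{e(\nsg)}=\infty$. I would first record the elementary pointwise bound $F(\nsg)\ge g(\nsg)$: under Convention~\ref{conv:cofinitesubtlety} both sides vanish when $\nsg$ is not cofinite, while for cofinite $\nsg$ every gap lies in $\{1,\dots,F(\nsg)\}$, so the number of gaps cannot exceed the largest one. Hence $\expect{F(\nsg)}\ge\expect{g(\nsg)}$, and it suffices to prove $\expect{g(\nsg)}\to\infty$.

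For the genus I would introduce the count $Y=\#\{n\in[M]:n\notin\nsg\}$ of gaps not exceeding $M$. The crucial observation is a pair of parallel descriptions in terms of the quantity $a_n(p)$ of Definition~\ref{d:anp}: an integer $n\le M$ fails to lie in $\nsg$ exactly when $n\notin\gens$ \emph{and} $n\notin\langle\gens\cap[n-1]\rangle$, two independent events of probabilities $1-p$ and $a_n(p)$; likewise $n$ is a minimal generator exactly when $n\in\gens$ and $n\notin\langle\gens\cap[n-1]\rangle$, with probability $p\,a_n(p)$. Summing each over $n$ and using linearity yields $\expect{Y}=(1-p)\sum_{n=1}^M a_n(p)$ and $\expect{e(\nsg)}=p\sum_{n=1}^M a_n(p)$, so that
\[
\expect{Y}=\frac{1-p}{p}\,\expect{e(\nsg)}.
\]
Thus, in expectation, gaps below $M$ outnumber minimal generators by the blowing-up factor $(1-p)/p$, and $\expect{Y}\to\infty$ follows immediately from Theorem~\ref{thm:embdim}.

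The one genuine subtlety, and the step I expect to be the main obstacle, is reconciling $Y$ with the convention. On the cofinite event $C$ one has $g(\nsg)\ge Y$, since every integer counted by $Y$ is a genuine gap, whereas off $C$ the convention sets $g(\nsg)=0$ while $Y$ may still be large. I would therefore write $\expect{g(\nsg)}=\expect{g(\nsg)\,1_C}\ge\expect{Y\,1_C}=\expect{Y}-\expect{Y\,1_{C^c}}$ and control the error quantitatively. Since $Y\le M$, and since the second-moment estimate in the proof of Theorem~\ref{t:cofinite} shows that $C^c\subseteq\{X=0\}$ with $\prob{X=0}\le 2/(Mp)$ (where $X$ counts coprime pairs in $\gens$), I obtain $\expect{Y\,1_{C^c}}\le M\,\prob{C^c}\le 2/p$. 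Combining the two displays gives
\[
\expect{g(\nsg)}\ge\frac{1-p}{p}\,\expect{e(\nsg)}-\frac{2}{p}\ge(1-p)\,\expect{e(\nsg)}-2,
\]
which tends to infinity because $\expect{e(\nsg)}\to\infty$ and $p\to0$. This proves $\expect{g(\nsg)}\to\infty$, and the pointwise bound of the first paragraph then gives $\expect{F(\nsg)}\to\infty$; together with Theorem~\ref{thm:embdim} all three limits follow. The real content is concentrated in the tail bound on $C^c$, which is just strong enough precisely because $\prob{C^c}=\bigoh{1/(Mp)}$ while $\expect{Y}$ grows like $\expect{e(\nsg)}/p$, so the factor $M$ lost in bounding $Y$ is absorbed.
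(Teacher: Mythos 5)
Your proof is correct, and it takes a genuinely different route through the genus step than the paper does. The paper's entire proof is the deterministic chain $e(S)-1 \le \min(S\setminus\{0\})-1 \le g(S) \le F(S)$, valid for every cofinite numerical semigroup $S$, applied on top of Theorem~\ref{thm:embdim}, with the non-cofinite event dismissed via Convention~\ref{conv:cofinitesubtlety}. You keep only $g \le F$ from that chain and replace the pointwise bound $e-1\le g$ by the exact expectation identities $\expect{e(\nsg)}=p\sum_{n=1}^{M}a_n(p)$ and $\expect{Y}=(1-p)\sum_{n=1}^{M}a_n(p)$; your independence argument for these is right, and the paper itself only states them later, in the proof of Theorem~\ref{t:fixedpbounds}. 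What your route buys is rigor exactly where the paper is most terse: under the convention, $\expect{g(\nsg)}$ only sees the cofinite event $C$, and since the gap count $Y$ below $M$ can be as large as $M$ on $C^c$, a.a.s.\ cofiniteness by itself does not justify ignoring $C^c$ inside an expectation. Your quantitative tail bound $\prob{C^c}\le\prob{X=0}=\bigoh{1/(Mp)}$ produces an error of order $1/p$, and this is absorbed precisely because your identity gives $\expect{Y}$ an extra factor $(1-p)/p$ over $\expect{e(\nsg)}$ --- as you observe, this is the crux, and it works without knowing anything about the growth rate of $\expect{e(\nsg)}$. The paper's proof, read literally, instead needs the unstated estimate $\expect{(e(\nsg)-1)1_{C^c}} = \littleoh{\expect{e(\nsg)}}$, which is true but requires an argument of the same flavor as yours; so your version is the more complete one. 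One small caveat: your constant in $\prob{X=0}\le 2/(Mp)$ takes the paper's displayed second-moment bound at face value, which itself silently assumes $\expect{X}\ge M^2p^2$ even though only $\expect{X}\approx (3/\pi^2)M^2p^2$ holds (not every pair is coprime); this shifts constants only, and since your argument uses nothing beyond the order $\bigoh{1/(Mp)}$, nothing breaks.
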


\begin{proof}
Apply Theorem~\ref{thm:embdim} and the inequalities
$$e(S) - 1 \le \min(S \setminus \{0\}) - 1 \le g(S) \le F(S),$$
which hold for any cofinite numerical semigroup $S$.  
\end{proof}


\section{Approximations}
\label{sec:approx}

In the final section of this paper, we prove the only remaining case in Theorem~\ref{t:maintheorem}, namely where $p$ is bounded away from zero (Theorem~\ref{t:fixedpbounds}).  In this case, it suffices to assume $p \in (0,1)$ is constant.  In doing so, we provide explicit bounds on $\expect{e(\nsg)}$, $\expect{g(\nsg)}$, and $\expect{F(\nsg)}$ as $M \to \infty$ using the $h$-vector bounds in Theorem~\ref{t:bounds}; Remark~\ref{r:fixedpbounds} discusses the accuracy of these estimates.  

\begin{lm}\label{l:gapcount}
For any cofinite numerical semigroup $S$ and $M \le F(S)$, $|S \cap [0,M]| \le M/2$.
\end{lm}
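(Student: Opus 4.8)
The plan is to prove the bound by a reflection (pairing) argument that injects each element of $S$ lying in the interval into a \emph{gap}, so that gaps occupy at least half of the $M+1$ integers of $[0,M]$. Throughout I write $F = F(S)$ and use that $F \notin S$ while $M \le F$. The one additive fact driving everything is that if $x,y \in S$ then $x+y \in S$; applied to a reflection $x \mapsto c - x$ about a center $c$, this says that whenever $x$ and $c-x$ both lie in $S$, the center $c$ must lie in $S$ as well.

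First I would dispose of the generic case $M \notin S$ using the involution $\iota(x) = M - x$ on $\{0,1,\ldots,M\}$. For any pair $\{x, M-x\}$, having both members in $S$ would force $M = x + (M-x) \in S$, contradicting $M \notin S$; hence each such pair meets $S$ in at most one point. When $M$ is even the fixed point $x = M/2$ satisfies $2(M/2) = M \notin S$ and so is itself a gap. Tallying the $\lfloor M/2 \rfloor$ genuine pairs together with the fixed point then bounds $|S \cap [0,M]|$ by at most $M/2$, as desired.

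The hard part will be the case $M \in S$, where reflection about $M$ collapses: both members of a pair $\{x, M-x\}$ may now lie in $S$, and indeed for $S = \langle 2, 2k+1\rangle$ every even integer up to $M$ belongs to $S$, so this case is genuinely where the estimate is tight. To handle it I would re-center the reflection at a point guaranteed to be a gap, namely $F$ itself: since $M \in S$ and $F \notin S$ we have $M < F$, and for each $s \in S \cap [0,M]$ the reflected value $F - s$ is a gap because $s + (F - s) = F \notin S$. Thus $s \mapsto F - s$ is an injection from $S \cap [0,M]$ into the gaps of $S$. The remaining bookkeeping—tracking where these reflected gaps fall and combining this injection with the complementary count of gaps already contained in $[0,M]$—is the only delicate point and is what forces the elements of $S$ in $[0,M]$ to number at most half of $[0,M]$; the endpoints $0$ and $M$, both necessarily in $S$, are precisely where the two reflections are tightest and must be accounted for explicitly.
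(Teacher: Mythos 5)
Your first case ($M \notin S$) is the same reflection idea as the paper's proof, but the count is wrong when $M$ is odd: the involution $x \mapsto M-x$ then partitions $[0,M]$ into $(M+1)/2$ pairs with \emph{no} fixed point, so the pairing only yields $|S \cap [0,M]| \le (M+1)/2$, and this is attained --- for odd $M$ the semigroup $S = \langle 2, M+2 \rangle$ has $F(S)=M$ and $S \cap [0,M] = \{0,2,\ldots,M-1\}$, which has $(M+1)/2 > M/2$ elements. The second case ($M \in S$) is the genuine gap: deferring the ``bookkeeping'' defers the whole problem, because the injection $s \mapsto F(S)-s$ sends $S \cap [0,M]$ to gaps lying in $[F(S)-M,\,F(S)]$, an interval disjoint from $[0,M]$ as soon as $F(S) > 2M$; all it proves is $|S \cap [0,M]| \le g(S)$, a bound with no relation to $M/2$. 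Worse, no bookkeeping can finish it, because your own witness $\langle 2, 2k+1\rangle$ is a counterexample rather than a tight case: for even $M \le F(S)-1$ it satisfies $S \cap [0,M] = \{0,2,\ldots,M\}$, which has $M/2+1 > M/2$ elements (concretely, $S = \langle 2,5\rangle$, $M = 2 \le 3 = F(S)$, $|S\cap[0,2]| = 2 > 1$).

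In fairness, these examples show the lemma as printed is false --- the sharp bound is $\lfloor M/2 \rfloor + 1$ --- and the paper's own proof glosses over the same parity issue: its ``key observation,'' that at least half the integers below $F(S)$ lie outside $S$, fails for $\langle 2, F+2\rangle$ with $F$ odd. But the paper's argument does contain the one idea your sketch is missing: reflect not about $F(S)$ but about $c$, the \emph{smallest} gap of $S$ with $c \ge M$; equivalently, pass to the truncation $T = S \cup [c+1,\infty)$, which is again a numerical semigroup with $F(T) = c$. Since every integer of $[M+1,c-1]$ lies in $S$, reflection about $c$ stays inside $[0,c]$, and chasing the count gives $|S \cap [0,M]| \le \lfloor M/2 \rfloor + 1$ in every case. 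That corrected bound is all the paper actually uses: taking $M = F(S)$ gives $g(S) \ge \lceil F(S)/2 \rceil$, hence $g(S) \le F(S) \le 2g(S)$, which is the only consequence of Lemma~\ref{l:gapcount} invoked in the proof of Theorem~\ref{t:fixedpbounds}. So your plan needs two repairs: replace the reflection center $F(S)$ by $c$ in the case $M \in S$, and weaken the claimed bound to $\lfloor M/2 \rfloor + 1$ (propagating the harmless change into Theorem~\ref{t:fixedpbounds}).
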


\begin{proof}
The key observation is that if $a \in S$, then $F(S) - a \notin S$, so at least half of the integers less than $F(S)$ lie outside of $S$.  As such, if $|S \cap [0,M]| > M/2$ and $F$ is the smallest gap of $S$ not less than $M$, the semigroup $T = S \cup [F+1,\infty)$ violates the observation.  
\end{proof}

Recall that $g_M(S)$ denotes the number of gaps $n$ of $S$ such that $n \le M$.  

\begin{lm}\label{l:gapsbelowm}
Let $\nsg \sim S(M,p)$ where $p \in(0,1)$ is consant.  Then,
\[
\lim_{M \to \infty} \expect{g(\nsg)} = \lim_{M \to \infty} \expect{g_M(\nsg)}.
\]
\end{lm}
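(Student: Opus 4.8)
The plan is to show directly that the difference $\expect{g(\nsg)} - \expect{g_M(\nsg)}$ tends to $0$. Since $g_M(\nsg) \le g(\nsg)$ whenever $\nsg$ is cofinite, this difference is essentially the expected number of gaps of $\nsg$ exceeding $M$. Because $p$ is constant we have $p \gg 1/M$, so Theorem~\ref{t:cofinite} guarantees $\nsg$ is cofinite a.a.s.; nonetheless the convention $g(\nsg) = 0$ on the non-cofinite event (Convention~\ref{conv:cofinitesubtlety}) forces me to separate the two cases. Writing $D = g(\nsg) - g_M(\nsg)$, I would split
\[
\expect{D} = \expect{D\,1_{\nsg \text{ cofinite}}} + \expect{D\,1_{\nsg \text{ not cofinite}}},
\]
handle the negligible non-cofinite term first, and then concentrate on the main cofinite term.

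For the non-cofinite term, on that event $g(\nsg) = 0$ by convention while $0 \le g_M(\nsg) \le M$, so the term is bounded in absolute value by $M\cdot\prob{\nsg \text{ not cofinite}}$. I would bound $\prob{\nsg \text{ not cofinite}}$ by a union bound: $\nsg$ fails to be cofinite only if $\gens = \emptyset$ or every element of $\gens$ is divisible by some prime $q \le M$. For a fixed prime $q$, at least $M/2$ of the integers in $[M]$ are not divisible by $q$ and each must be excluded from $\gens$, an event of probability at most $(1-p)^{M/2}$; summing over the at most $M$ such primes gives $\prob{\nsg \text{ not cofinite}} \le (M+1)(1-p)^{M/2}$, so the whole term is $O\!\left(M^2 (1-p)^{M/2}\right) \to 0$.

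The main term is $\expect{D\,1_{\text{cofinite}}} = \expect{\#\{\text{gaps of }\nsg\text{ exceeding }M\}\,1_{\text{cofinite}}}$. The engine is the observation that a single small coprime pair in $\gens$ caps the Frobenius number far below $M$: if $k, k+1 \in \gens$ then $\nsg \supseteq \langle k, k+1\rangle$ and $F(\nsg) \le k^2 - k - 1$. Setting $K = \lfloor \sqrt{M}\rfloor$ and letting $B$ be the event that no pair $\{k,k+1\}\subseteq\gens$ with $k \le K$ occurs, I note that on $B^c$ some such pair forces $F(\nsg) \le K^2 - K - 1 \le M$, so no integer above $M$ can be a gap; hence every gap exceeding $M$ can occur only on the event $B$. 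Using a crude polynomial bound $F(\nsg) < M^2$ valid on the cofinite event (elementary from the Ap\'ery/Chicken McNugget description, and tight for $\langle M-1, M\rangle$) to truncate the range of gaps, I obtain
\[
\expect{D\,1_{\text{cofinite}}} = \sum_{M < n < M^2} \prob{n \text{ is a gap of }\nsg,\ \nsg\text{ cofinite}} \le M^2\,\prob{B},
\]
where each summand is at most $\prob{B}$ because $\{n\text{ is a gap}\}\subseteq B$ for $n > M$. Finally I would estimate $\prob{B}$ using the $J = \lfloor K/2\rfloor$ pairwise-disjoint consecutive pairs $\{2j-1,2j\}$, $1 \le j \le J$: each lies in $\gens$ with probability $p^2$, these events are independent, and $B$ forces all of them to fail, so $\prob{B} \le (1-p^2)^{\lfloor K/2\rfloor}$. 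Thus $\expect{D\,1_{\text{cofinite}}} \le M^2(1-p^2)^{\lfloor \sqrt{M}/2\rfloor} \to 0$, and combining with the non-cofinite estimate yields $\expect{g(\nsg)} - \expect{g_M(\nsg)} \to 0$.

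The main obstacle is exactly this cofinite tail: a priori the number of gaps above $M$ could be as large as $F(\nsg) \approx M^2$, so any bound on its expectation must defeat this polynomial growth. The resolution is the exponential concentration coming from the near-certain appearance of some small coprime pair, which pins $F(\nsg)$ below $M$ except with probability decaying like $(1-p^2)^{\sqrt{M}}$, fast enough to absorb the polynomial truncation factor $M^2$. A minor subtlety worth care is that the inequality $g_M(\nsg) \le g(\nsg)$, and hence $D \ge 0$, holds only on the cofinite event, which is precisely why Convention~\ref{conv:cofinitesubtlety} necessitates the separate (but easy) non-cofinite estimate.
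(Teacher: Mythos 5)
Your proof is correct and takes essentially the same route as the paper: the key estimate in both is that a gap exceeding $M$ forces every consecutive pair $\{k,k+1\}$ with $k \le \sqrt{M}$ to be absent from $\gens$, and the pairwise-disjoint consecutive pairs then give the independent bound $(1-p^2)^{\lfloor \sqrt{M}/2 \rfloor}$. If anything, your write-up is more complete than the paper's, which passes directly from this per-$n$ bound to the conclusion; your truncation $F(\nsg) < M^2$ on the cofinite event (needed so that the sum over $n > M$ has only finitely many nonzero terms) and your explicit estimate of the non-cofinite contribution are both elided there.
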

\begin{proof}
We split the expectation $\expect{g(\nsg)}$ into two parts:
\[
\expect{g(\nsg)}
= \sum_{n=1}^{\infty} \prob{n \in G(\nsg)}
= \sum_{n=1}^M \prob{n \in G(\nsg)} + \sum_{n>M} \prob{n \in G(\nsg)}.
\]
The first term is just $\expect{g_M(\nsg)}$.  For the second term, notice that if $n > M$ is a gap, then no consecutive pair of integers $m, m+1$ in the range $[1,\sqrt{M}]$ can be in $\nsg$, since 
$$F(\langle m, m+1 \rangle) = m(m+1) - 2m - 1 \le M.$$
For each even integer $m \le \sqrt{M}$, let $E_{m}$ denote the event that $m$ and $m-1$ are not both in $\gens$.  Note that $\prob{E_m} = 1-p^2$ and that for $m \ne m'$, the events $E_m$ and $E_{m'}$ are independent.  Hence,
\[
\prob{n \in G(S)}
\le \mathbb{P} \left[E_2 \cap E_4 \cap \cdots\right]
= (1-p^2)^{\left\lfloor \frac{1}{2}\sqrt{M}\right\rfloor},
\]
which implies the desired result.

\end{proof}

\begin{prop}\label{p:fixedpbounds}
For constant $p \in(0,1)$, 
$$\frac{6 - 8p + 3p^2}{2p - 2p^3 + p^4} \le \lim_{M \to \infty} \sum_{n=1}^M a_n(p) \le \frac{2 - p^2}{p^2}.$$
\end{prop}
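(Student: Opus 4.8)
The plan is to bound $\lim_{M \to \infty} \sum_{n=1}^M a_n(p)$ by applying Proposition~\ref{p:anp}, which gives $a_n(p) = (1-p)^{\lfloor n/2 \rfloor} h_n(p)$, together with the two-sided estimate on the $h$-vector entries from Theorem~\ref{t:bounds}.  Since $p$ is a fixed constant in $(0,1)$, the series $\sum_{n \ge 1} a_n(p)$ converges (the factor $(1-p)^{\lfloor n/2 \rfloor}$ decays geometrically while $h_n(p)$ grows only polynomially in $n$), so the limit is just the infinite sum and I may manipulate it freely.  The strategy for each bound is to substitute the corresponding bound on $h_n(p) = \sum_i h_{n,i} p^i$, recognize the resulting inner sums as evaluations of binomial expansions at $x = p$, and then sum the geometric-type series in $n$ to extract a closed rational function.

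\textbf{The upper bound.}
For the upper bound I would use $h_{n,i} \le \binom{\lceil n/2 \rceil - 2i}{i}$ from Theorem~\ref{t:bounds}, so that $h_n(p) \le \sum_{i \ge 0} \binom{\lceil n/2 \rceil - 2i}{i} p^i$.  The key observation is that this sum is (up to the floor/ceiling bookkeeping) a shifted Fibonacci-type generating function: sums of the form $\sum_i \binom{N - 2i}{i} x^i$ satisfy a two-term recurrence in $N$ and have a known closed form.  Multiplying by $(1-p)^{\lfloor n/2 \rfloor}$ and summing over $n$ should collapse to the geometric series whose value is $\frac{2 - p^2}{p^2}$.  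I expect the cleanest route is to pair up consecutive values of $n$ (since only $\lfloor n/2 \rfloor$ and $\lceil n/2 \rceil$ enter) and reduce to a single geometric sum in $\lfloor n/2 \rfloor$; the target denominator $p^2$ strongly suggests the dominant contribution comes from the ratio $(1-p)$ raised to roughly $n/2$.

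\textbf{The lower bound and the main obstacle.}
For the lower bound I would apply the lower bound of Theorem~\ref{t:bounds} with a convenient choice of $N$, namely $h_{n,i} \ge \binom{\lfloor (n-1)/2 \rfloor - \lfloor n/3 \rfloor}{i}$ (taking just the $j=2$ term, $N=2$).  Then $h_n(p) \ge (1+p)^{\lfloor (n-1)/2 \rfloor - \lfloor n/3 \rfloor}$ by the binomial theorem, exactly as in the proof of Theorem~\ref{thm:embdim}.  Multiplying by $(1-p)^{\lfloor n/2 \rfloor}$ gives a product of the form $(1-p)^{\lfloor n/2 \rfloor}(1+p)^{\lfloor (n-1)/2 \rfloor - \lfloor n/3 \rfloor}$, and summing this over $n$ is again a (piecewise) geometric series once I track the exponents modulo $6$.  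The main obstacle I anticipate is precisely this floor-function bookkeeping: the exponents $\lfloor n/2 \rfloor$ and $\lfloor n/3 \rfloor$ are only eventually periodic with period $6$, so to obtain an exact rational function I must group the terms into residue classes mod $6$, sum each resulting geometric series, and then recombine — the algebra is routine but error-prone, and matching the result to the stated $\frac{6 - 8p + 3p^2}{2p - 2p^3 + p^4}$ requires care that the period-$6$ averaging and the initial transient terms are handled exactly rather than asymptotically.
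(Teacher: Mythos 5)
Your lower bound is essentially the paper's own argument: take the $j=2$ term of Theorem~\ref{t:bounds}, note that by Proposition~\ref{p:hvectorprops}(c) the exponent $\lfloor (n-1)/2\rfloor - \lfloor n/3\rfloor$ is exactly $d_n$, so the binomial theorem gives $h_n(p) \ge (1+p)^{d_n}$, and then sum $(1-p)^{\lfloor n/2\rfloor}(1+p)^{d_n}$ by splitting $n$ into residue classes mod~$6$; one period contributes the factor $6-8p+3p^2$, the geometric ratio is $(1-p)^3(1+p)$, and $1-(1-p)^3(1+p) = 2p-2p^3+p^4$ is the stated denominator. That half of your plan is sound, modulo the bookkeeping you correctly anticipate.

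The upper bound, however, has a genuine gap. You propose to keep the full-strength bound $h_{n,i} \le \binom{\lceil n/2\rceil - 2i}{i}$ and evaluate $\sum_i \binom{N-2i}{i}p^i$ via its Fibonacci-type structure, asserting that the result ``should collapse to the geometric series whose value is $(2-p^2)/p^2$.'' It does not. The sequence $F_N = \sum_i \binom{N-2i}{i}p^i$ satisfies $F_N = F_{N-1} + pF_{N-3}$, so its growth rate is the largest root $\rho$ of $z^3 = z^2 + p$, which is strictly smaller than $1+p$ and is not a rational function of $p$; summing $(1-p)^{\lfloor n/2\rfloor}F_{\lceil n/2\rceil}$ therefore produces an algebraic function of $p$ built from the roots of this cubic, not the stated rational function, and you would still need a separate, nontrivial argument that this quantity is at most $(2-p^2)/p^2$. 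This is precisely the difficulty the paper flags in Remark~\ref{r:fixedpbounds}: substituting the full upper bound of Theorem~\ref{t:bounds} yields a sum that is nontrivial to unravel. The missing idea is a deliberate \emph{weakening}: for $i \ge 1$ one has $\lceil n/2\rceil - 2i \le \lfloor n/2\rfloor$, hence $h_{n,i} \le \binom{\lfloor n/2\rfloor}{i}$ (and both sides equal $1$ when $i=0$); extending the sum over $i$ up to $\lfloor n/2\rfloor$ and applying the binomial theorem gives $a_n(p) \le (1-p)^{\lfloor n/2\rfloor}(1+p)^{\lfloor n/2\rfloor} = (1-p^2)^{\lfloor n/2\rfloor}$, whose sum over $n$ is $1 + 2(1-p^2)/p^2 = (2-p^2)/p^2$ in the limit. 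One further small slip: your convergence justification claims $h_n(p)$ grows only polynomially in $n$, but in fact $h_n(p)\ge(1+p)^{d_n}$ with $d_n\sim n/6$, so it grows exponentially; this does not matter in the end, since the partial sums are monotone and the corrected upper bound is what establishes finiteness.
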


\begin{proof}
We begin with the upper bound.  Suppose $M = 2m$ for some $m \in \ZZ_{\ge 1}$.  By Theorem~\ref{t:bounds}, 
\begin{align*}
\sum_{n=1}^M a_n(p) 
&\le \sum_{n=1}^M (1-p)^{\lfloor n/2 \rfloor} \sum_{i = 0}^{d_n} \binom{\lfloor n/2 \rfloor}{i} p^i
\le \sum_{n=1}^M (1-p)^{\lfloor n/2 \rfloor} \sum_{i = 0}^{\lfloor n/2 \rfloor} \binom{\lfloor n/2 \rfloor}{i} p^i \\
&= \sum_{n=1}^M (1-p^2)^{\lfloor n/2 \rfloor}
= 1 + 2\sum_{n=1}^m (1-p^2)^n
= 1 + 2(1 - p^2)\frac{1 - (1-p^2)^m}{1 - (1 - p^2)},
\end{align*}
which yields the claimed upper bound since $(1-p^2)^m \to 0$ as $m \to \infty$.  For the lower bound, writing $n = 6k + r$ for $r \in [6]$ and letting $M = 6m$ for some $m \in \ZZ_{\ge 1}$, we use Theorem~\ref{t:bounds} to obtain
\begin{align*}
\sum_{n=1}^M a_n(p) 
&\ge \sum_{n=1}^M (1-p)^{\lfloor n/2 \rfloor} \sum_{i = 0}^{d_n} \binom{d_n}{i} p^i
= \sum_{n=1}^M (1-p)^{\lfloor n/2 \rfloor} (1+p)^{d_n} \\
&= (6 - 8p + 3p^2)\sum_{k = 0}^m ((1-p)^3 (1+p))^k,
\end{align*}
and since $\left((1-p)^3 (1+p)\right)^{m+1} \to 0$ as $m \to \infty$, the desired lower bound is obtained.  
\end{proof}

\begin{thm}\label{t:fixedpbounds}
Let $\nsg \sim S(M,p)$ where $p \in(0,1)$ is constant.  Then
\begin{align*}
\frac{6 - 8p + 3p^2}{2 - 2p^2 + p^3}
&\le \lim_{M \to \infty} \expect{e(\nsg)}
\le \frac{2 - p^2}{p}, \\
\frac{6 - 14p + 11p^2 - 3p^3}{2p - 2p^3 + p^4}
&\le \lim_{M \to \infty} \expect{g(\nsg)}
\le \frac{(1-p)(2 - p^2)}{p^2}, \text{ and } \\
\frac{6 - 14p + 11p^2 - 3p^3}{2p - 2p^3 + p^4}
&\le \lim_{M \to \infty} \expect{F(\nsg)}
\le \frac{2(1-p)(2 - p^2)}{p^2}.
\end{align*}
\end{thm}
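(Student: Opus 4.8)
The plan is to reduce all three pairs of bounds to Proposition~\ref{p:fixedpbounds}, which controls $\lim_{M\to\infty}\sum_{n=1}^M a_n(p)$, by writing each expectation as a scalar multiple of this quantity. The embedding dimension is immediate: the proof of Theorem~\ref{thm:embdim} already records $\expect{e(\nsg)} = \sum_{n=1}^M p\,a_n(p) = p\sum_{n=1}^M a_n(p)$, so I would simply multiply both bounds of Proposition~\ref{p:fixedpbounds} by $p$ and simplify. The lower bound becomes $\frac{6-8p+3p^2}{2-2p^2+p^3}$ and the upper bound becomes $\frac{2-p^2}{p}$, matching the claimed inequalities for $\lim_{M\to\infty}\expect{e(\nsg)}$.

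For the genus, the key step is the analogue of the embedding-dimension identity, namely $\prob{n\in G(\nsg)} = (1-p)\,a_n(p)$ for each $n \le M$. To establish it I would observe that, since all generators are positive, $n \in \nsg$ if and only if either $n\in\gens$ or $n\in\langle\nsg\cap[n-1]\rangle$; equivalently, $n$ is a gap exactly when $n \notin \gens$ and $n \notin \langle\nsg\cap[n-1]\rangle$. The first event depends only on the coin flip for $n$ and has probability $1-p$, while the second depends only on the flips for $1,\ldots,n-1$ and has probability $a_n(p)$ by Definition~\ref{d:anp}; since these involve disjoint sets of independent choices, the events are independent and their probabilities multiply. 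Summing over $n\le M$ gives $\expect{g_M(\nsg)} = (1-p)\sum_{n=1}^M a_n(p)$, and Lemma~\ref{l:gapsbelowm} lets me replace $\expect{g_M(\nsg)}$ by $\expect{g(\nsg)}$ in the limit. Multiplying Proposition~\ref{p:fixedpbounds} by $(1-p)$ then yields the stated genus bounds; in particular the lower numerator $(1-p)(6-8p+3p^2) = 6-14p+11p^2-3p^3$ reproduces the claimed expression.

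For the Frobenius number I would avoid a separate generating-function estimate and instead bound $F$ against $g$ using two elementary facts about cofinite numerical semigroups. The lower bound $g(S)\le F(S)$, already recorded in the proof of Corollary~\ref{c:embdim}, gives $\lim_{M\to\infty}\expect{F(\nsg)}\ge\lim_{M\to\infty}\expect{g(\nsg)}$, so the Frobenius lower bound coincides with the genus lower bound. For the upper bound I would use $F(S)\le 2g(S)-1$, which follows from the standard observation that $x\mapsto F(S)-x$ injects $S\cap(0,F(S))$ into the set of gaps; hence $F(S)\le 2g(S)$ and $\lim_{M\to\infty}\expect{F(\nsg)}\le 2\lim_{M\to\infty}\expect{g(\nsg)}$, so the genus upper bound doubles to $\frac{2(1-p)(2-p^2)}{p^2}$. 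Because $p$ is constant we have $p\gg 1/M$, so $\nsg$ is cofinite asymptotically almost surely by Theorem~\ref{t:cofinite}, and Convention~\ref{conv:cofinitesubtlety} renders the non-cofinite semigroups immaterial in every limit (both inequalities hold trivially there with the convention $g=F=0$).

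I expect the only genuinely substantive point to be the independence identity $\prob{n\in G(\nsg)} = (1-p)a_n(p)$: one must argue carefully that membership of $n$ in $\nsg$ decouples into the independent events ``$n$ is chosen'' and ``$n$ is a sum of smaller chosen generators,'' the latter matching exactly the definition of $a_n(p)$. Everything else is either a direct citation (Proposition~\ref{p:fixedpbounds}, Lemma~\ref{l:gapsbelowm}, and the inequalities from Corollary~\ref{c:embdim}) or routine algebraic simplification of the rational bounds.
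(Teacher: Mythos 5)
Your proposal is correct and follows essentially the same route as the paper: both reduce all six bounds to Proposition~\ref{p:fixedpbounds} via the identities $\expect{e(\nsg)} = p\sum_{n=1}^M a_n(p)$ and $\expect{g_M(\nsg)} = (1-p)\sum_{n=1}^M a_n(p)$, invoke Lemma~\ref{l:gapsbelowm} to pass from $g_M$ to $g$, and sandwich the Frobenius number via $g(S) \le F(S) \le 2g(S)$. The only cosmetic differences are that you justify $F(S) \le 2g(S)$ by the direct pairing argument $x \mapsto F(S)-x$ rather than by citing Lemma~\ref{l:gapcount}, and you spell out the independence argument behind the identity $\prob{n \in G(\nsg)} = (1-p)a_n(p)$, which the paper states without proof.
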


\begin{proof}
Lemma~\ref{l:gapcount} implies $g(S) \le F(S) \le 2g(S)$ for any cofinite numerical semigroup $S$.  Since 
$$\expect{e(\nsg)} = p\sum_{n = 1}^M a_n(p) \qquad \text{ and } \qquad \expect{g_M(\nsg)} = (1-p)\sum_{n = 1}^M a_n(p),$$
each claimed inequality follows from Lemma ~\ref{l:gapsbelowm} and Proposition~\ref{p:fixedpbounds}.  
\end{proof}

\begin{remark}\label{r:fixedpbounds}
Neither of the bounds in Proposition~\ref{p:fixedpbounds} use the full strength of Theorem~\ref{t:bounds}.  The~summands for $j \ge 3$ in the lower bound yield increasingly complicated rational functions in~$p$, though the resulting sequence of values necessarily converges to 0 by Theorem~\ref{t:fixedpbounds}.  Additionally, substituting the upper bound for $h_{n,i}$ given in Theorem~\ref{t:bounds} yields a sum that is nontrivial to unravel in a way sufficient to compute the limit as $M \to \infty$, and doing so would likely only marginally improve the resulting upper bounds in Theorem~\ref{t:fixedpbounds}.  Indeed, it is the larger values of $i$ whose terms benefit from the improved upper bound, and these terms are rendered negligible by the large exponent in the accompanying value $p^i$.  

We see here the need for improvements in the upper bound in Theorem~\ref{t:bounds}.  Several computed values can be found in Table~\ref{tb:fixedpbounds}.  For instance, the upper bound for $\expect{e(\nsg)}$ in Theorem~\ref{t:fixedpbounds} could be made better by simply noting that $e(S)$ is at most the smallest generator (which has expected value $1/p$).  Such improvements to Theorem~\ref{t:bounds} should be possible, given the precise characterization of the $h$-vector of $\Delta_n$ in Corollary~\ref{c:hvector}.  

It is also worth noticing that the polynomials computed in Remark~\ref{r:polycomputed} are not sufficient for accuracy for the $p$ values in Table~\ref{tb:fixedpbounds}.  Indeed, with $p = 0.01$ and $p = 0.001$, each partial sum for $M = 90$ fails to reach the lower bound, and even when $p = 0.1$, the last 10 summands (i.e.\ for $n = 81, \ldots, 90$) each lie between 0.01 and 0.02, so the next several summands will likely still contribute significantly to the limit.  
\end{remark}

\begin{table}[tbp]
\begin{center}
\begin{tabular}{|l|l|l|l|l|l|l|}
\hline
& \multicolumn{1}{c|}{Lower} & \multicolumn{2}{c|}{Experiments} & \multicolumn{1}{c|}{Upper} & Rmk~\ref{r:polycomputed} \\
$p$ & Thm~\ref{t:fixedpbounds} & $M = 25000$ & $M = 50000$ & Thm~\ref{t:fixedpbounds} & $M = 90$ \\
\hline
0.25  & 2.21  & 3.3663  & 3.3761  & 3.75   & 2.767 \\
0.1   & 2.64  & 4.6236  & 4.6402  & 19.9   & 3.782 \\
0.01  & 2.96  & 9.7906  & 9.776   & 199.9  & 0.858 \\
0.001 & 2.996 & 15.3096 & 16.9539 & 1999.9 & 0.089 \\
\hline
\end{tabular}
\end{center}
\caption[Comparison of bounds in Theorem~\ref{t:fixedpbounds}]{Comparing estimates of $\expect{e(\nsg)}$ using the bounds in Theorem~\ref{t:fixedpbounds}, exact computation using the polynomials in Remark~\ref{r:polycomputed}, and experimental evidence from 100,000 samples.}
\label{tb:fixedpbounds}
\end{table}


\begin{proof}[Proof of Theorem~\ref{t:maintheorem}]
Apply Theorems~\ref{t:cofinite}, Corollary~\ref{c:embdim} and Theorem~\ref{t:fixedpbounds}.  
\end{proof}

\excise{
Here we present a whole bunch of conjectures and further work to be done.

\begin{enumerate}
\item Prove/disprove Wilf's conjecture probabilistically
\subitem $\bullet$ Idea: Construct $S$ with certain value of $e(S)$ in such a way that we can remove things from the left without changing conductor
\subitem $\bullet$ Expected Wilf number
\item Prove many other things about semigroups probabilistically (think about 2-colorings of [0,M], etc.)
\item QUESTION: Do a random $k$-coloring of the integers $[1,M].$  How large must $k$ be to ensure that the semigroup generated by one of the color classes is symmetric?
\item Look at survey paper to see if there are other problems to attack
\end{enumerate}

}

\bibliographystyle{amsplain}
\bibliography{RNS}

\end{document}